\documentclass[11pt,reqno]{amsart}
\usepackage[margin=2.35cm]{geometry}
\usepackage{amsmath,amssymb,mathtools}
\usepackage{enumitem}
\usepackage{microtype}
\usepackage[pagebackref,hidelinks]{hyperref}
\usepackage[nameinlink,capitalise,noabbrev]{cleveref}

\renewcommand*{\backrefalt}[4]{%
  \ifcase #1\relax
  \else \space\(\hookleftarrow\)\,#2%
  \fi}

\numberwithin{equation}{section}

\makeatletter
\newenvironment{lefttagged}{\tagsleft@true}{\tagsleft@false}
\renewcommand{\@setauthors}{%
  \begingroup
  \trivlist
  \centering\normalsize \@topsep30\p@\relax
  \advance\@topsep by -\baselineskip
  \item\relax
  \authors
  \endtrivlist
  \endgroup
}
\makeatother

\newtheorem{theorem}{Theorem}[section]
\newtheorem{lemma}[theorem]{Lemma}
\newtheorem{proposition}[theorem]{Proposition}
\newtheorem{corollary}[theorem]{Corollary}
\newtheorem{conjecture}[theorem]{Conjecture}

\newtheorem*{claim*}{Claim}
\newtheoremstyle{nestedclaimstyle}
  {\topsep}{\topsep}{\itshape}{}{\bfseries}{:}{0.5em}{}
\theoremstyle{nestedclaimstyle}
\newtheorem{nestedclaim}{Claim}[theorem]
\theoremstyle{plain}

\theoremstyle{definition}
\newtheorem{definition}[theorem]{Definition}
\newtheorem{example}[theorem]{Example}
\theoremstyle{remark}

\crefname{theorem}{Theorem}{Theorems}
\crefname{lemma}{Lemma}{Lemmas}
\crefname{proposition}{Proposition}{Propositions}
\crefname{corollary}{Corollary}{Corollaries}
\crefname{claim}{Claim}{Claims}
\crefname{nestedclaim}{Claim}{Claims}
\Crefname{nestedclaim}{Claim}{Claims}
\crefname{fact}{Fact}{Facts}
\crefname{definition}{Definition}{Definitions}
\crefname{example}{Example}{Examples}
\crefname{conjecture}{Conjecture}{Conjectures}
\crefname{remark}{Remark}{Remarks}

\newenvironment{claimblock}
  {\begin{list}{}{%
     \setlength{\leftmargin}{2em}%
     \setlength{\rightmargin}{2em}%
     \setlength{\itemsep}{0pt}}\item[]}
  {\end{list}}

\newcommand{\cC}{\mathcal C}

\newcommand{\cP}{\mathcal P}

\newcommand{\abs}[1]{\lvert#1\rvert}
\newcommand{\set}[1]{\{#1\}}
\newcommand{\floor}[1]{\lfloor#1\rfloor}
\newcommand{\ceil}[1]{\lceil#1\rceil}

\title[Chv\'atal-type codegree condition]
{A Chv\'atal-type codegree condition for Hamiltonian cycles in $k$-uniform hypergraphs}
\author[Lu-Ming Zhang]{Lu-Ming Zhang\textsuperscript{*}}
\thanks{\textsuperscript{*}Department of Mathematics, London School of Economics and Political Science, London, United Kingdom (\href{mailto:l.zhang100@lse.ac.uk}{l.zhang100@lse.ac.uk}).}
\date{}

\begin{document}

\begin{abstract}
We prove an asymptotic Chv\'atal-type codegree criterion for tight
Hamiltonian cycles in $k$-uniform hypergraphs for every fixed $k\ge3$.
The criterion allows small degrees to be compensated by large degrees in the link graphs of $(k-2)$-tuples. 
It strengthens the asymptotic
Dirac-type theorem by R\"odl, Ruci\'nski and Szemer\'edi and implies a P\'osa-type criterion conjectured by Sch\"ulke.

%We establish a Chv\'atal-type codegree criterion for tight Hamiltonian cycles in uniform hypergraphs.  
%This gives a hypergraph analogue of Chv\'atal's classical degree-sequence theorem.
%In particular, it strengthens the celebrated Dirac-type result by R\"odl, Ruci\'nski, and Szemer\'edi and implies, as a corollary, a P\'osa-type result conjectured by Sch\"ulke.
%The proof uses the Hamilton-framework method of Lang and Sanhueza-Matamala, verifying the framework axioms and establishing inheritance of the codegree conditions by many small induced subgraphs.
\end{abstract}

\maketitle

\section{Introduction}

We say a (simple) graph is Hamiltonian if it contains a Hamiltonian cycle.
Determining whether a graph is Hamiltonian is NP-complete \cite{Karp}. 
Therefore, finding sufficient conditions for Hamiltonicity has long been a central problem in graph theory.  
Dirac \cite{Dirac} established a sharp minimum degree condition that every $n$-vertex graph $G$ with $\delta(G) \ge n/2$ is Hamiltonian.  
P\'osa \cite{Posa} strengthened this minimum-degree condition to a degree-sequence condition.
Let $G$ be a graph on vertex set $[n]$ with degree sequence $d(1)\le\cdots\le d(n)$.
P\'osa proved that if $d(i)\ge i+1$ for every $i<n/2$, then $G$ is Hamiltonian.  
Chv\'atal \cite{Chvatal} further strengthened P\'osa's theorem by allowing a failure of the lower-degree condition to be
offset by a corresponding upper-degree condition, as follows.

\begin{theorem}[Chv\'atal \cite{Chvatal}]\label{thm:classic}
For $n\ge 3$,
let $G$ be a graph on vertex set $[n]$ with degree sequence $d(1)\le\cdots\le d(n)$.  
Suppose that, for every integer $1\le i < n/2$,
\[
 d(i)\ge i + 1\quad\text{or}\quad d(n-i)\ge n-i.
\]
Then, $G$ contains a Hamiltonian cycle.
\end{theorem}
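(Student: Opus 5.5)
The plan is to use the standard closure argument of Bondy and Chvátal, combined with a maximal counterexample. Assume the theorem fails. Among all counterexamples on $[n]$, take $G$ with the maximum number of edges. Adding edges never decreases any entry of the sorted degree sequence, so the hypothesis of \cref{thm:classic} is preserved under edge addition. Hence $G+uv$ is Hamiltonian for every non-edge $uv$. $G$ is not complete, since $K_n$ is Hamiltonian for $n\ge3$.

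First, fix a non-adjacent pair $u,v$ with $d(u)+d(v)$ as large as possible, where $d(u)\le d(v)$. Since $G+uv$ has a Hamiltonian cycle through $uv$, $G$ has a Hamiltonian path $u=x_1,x_2,\dots,x_n=v$. Second, apply Pósa's rotation-type counting. If $ux_{j+1}$ and $x_jv$ were both edges for some $j$, then $x_1\dots x_jx_nx_{n-1}\dots x_{j+1}x_1$ would be a Hamiltonian cycle. So the index sets $\{j: ux_{j+1}\in E\}$ and $\{j: vx_j\in E\}$ are disjoint subsets of $[n-1]$. This gives $d(u)+d(v)\le n-1$. Set $i:=d(u)$; then $i<n/2$, and $i\ge1$ if we note the path.

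Third, we derive the degree-sequence consequences. For each $j$ with $ux_{j+1}\in E$, the vertex $x_j$ is not adjacent to $v$. By maximality of the pair, $d(x_j)\le d(u)=i$ (recall $d(u)\le d(v)$, and $d(x_j)+d(v)\le d(u)+d(v)$). This yields $i$ vertices of degree at most $i$, so $d(i)\le i$ in the sorted sequence. Similarly, each of the at least $n-1-d(v)$ vertices $x_j\ne v$ with $x_j$ non-adjacent to $u$ has $d(x_j)\le d(v)\le n-1-i$. Together with $u$ itself, whose degree is $i\le n-1-i$, this gives at least $n-d(v)\ge i+1$ vertices of degree at most $n-1-i$. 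Hence $d(i+1)\le n-i-1$, and I must be careful here to obtain exactly $n-i$ vertices of degree $<n-i$.

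The step needing the most care is this last count. The correct count is as follows. Take the vertices non-adjacent to $u$, excluding $u$; there are $n-1-i$ of them. Each such $w$ satisfies $d(w)\le d(v)$ by maximality, and $d(v)\le n-1-i<n-i$. Adding $u$ itself, whose degree is $i<n-i$, gives $n-i$ vertices of degree $<n-i$. Thus $d(n-i)\le n-i-1$, while also $d(i)\le i$. This contradicts the hypothesis for this $i$, with $1\le i<n/2$. If $i=0$, then $u$ is isolated and $d(1)=0<2$; also all $n-1$ others plus $u$ give $d(n)$... To handle this edge case, I will simply note that the Hamiltonian path forces $d(u)\ge1$. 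This completes the contradiction, so no counterexample exists.
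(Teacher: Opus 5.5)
The paper does not prove this theorem. It quotes it from Chv\'atal and uses it as a black box, only to get that the link graphs $L(A)$ with $A\in\binom{U}{k-2}$ are connected and Hamiltonian in \cref{lem:connected} and \cref{lem:aperiodic}. So there is no internal proof to compare against. Your argument is the classical one, essentially Chv\'atal's original proof, and it is correct. It uses an edge-maximal counterexample, a non-adjacent pair $u,v$ maximising $d(u)+d(v)$, a Hamiltonian $u$--$v$ path, and the disjointness of $\{j: ux_{j+1}\in E\}$ and $\{j: x_jv\in E\}$ inside $[n-1]$.

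The key steps check out:
\begin{itemize}
\item The $i=d(u)$ vertices $x_j$ with $ux_{j+1}\in E$ include $x_1=u$ itself. Each is distinct from $v$ and non-adjacent to it, so maximality gives each degree at most $i$. Hence $d(i)\le i$.
\item There are $n-1-i$ vertices other than $u$ that are non-adjacent to $u$. Together with $u$, this gives $n-i$ vertices of degree at most $d(v)\le n-1-i$. Hence $d(n-i)\le n-i-1$.
\item The range $1\le i<n/2$ holds. First, $ux_2\in E$ because $x_2\neq v$ when $n\ge 3$. Second, $d(u)+d(v)\le n-1$.
\end{itemize}

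Two things should be removed in a final write-up. The first is the aborted count in your third paragraph. It uses $n-1-d(v)$ where the relevant number is $n-1-d(u)$, and it only yields $d(i+1)\le n-i-1$, which is not what is needed. The second is the unfinished sentence about $i=0$. Keep only the corrected count and the observation that the Hamiltonian path forces $d(u)\ge 1$.
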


The problem of characterising Hamiltonicity under degree conditions generalises to hypergraphs.
In a $k$-uniform hypergraph ($k$-graph, for short), a \emph{(tight) Hamiltonian cycle} is a cyclic ordering of all vertices such that every $k$ cyclically consecutive vertices form an edge. 
Similarly, we say a $k$-graph is Hamiltonian if it contains a Hamiltonian cycle.
For a $k$-graph $G$ and $S\subseteq V(G)$, the degree of $S$ is $d_G(S):=\bigl|\{e\in E(G):S\subseteq e\}\bigr|$.
We omit the subscript $G$ when it is clear.
For $1\le d\le k-1$, the minimum $d$-degree of $G$ is
$\delta_{d}(G)=\min \set{d(S): S\in \binom{V(G)}{d}}$.
We also call $d(S)$ the codegree of $S$ for $|S| = k-1$ and $\delta_{k-1}(G)$ the minimum codegree of $G$.
In this work, we will focus on codegree conditions for tight Hamiltonicity.

The study of Dirac-type codegree conditions for tight Hamiltonian cycles in
hypergraphs was initiated by Katona and Kierstead
\cite{KatonaKierstead}.  R\"odl, Ruci\'nski, and Szemer\'edi \cite{RRSApprox} established,
for every fixed $k\ge3$, the asymptotically sharp minimum-codegree threshold $(1/2+o(1))n$ for $n$-vertex $k$-graphs.  
For $k=3$, they later determined the exact threshold
$\ceil{(n-1)/2}$ for all sufficiently large $n$ \cite{RRS3}.  
In 2026, Letzter,
Lang, Ranganathan, and Sanhueza-Matamala announced that, for every fixed $k\ge3$ and all sufficiently large $n$, the exact minimum-codegree
threshold is
$\ceil{(n-k+2)/2}$, confirming the conjecture of
Katona and Kierstead proposed in \cite{KatonaKierstead}. 
However, no generalisations of the P\'osa or Chv\'atal conditions for simple graphs were known except for a
P\'osa-type codegree condition for $3$-graphs proved by Sch\"ulke in \cite{Schuelke}.
The present work provides a Chv\'atal-type codegree condition for every fixed uniformity.
In particular, it improves the Dirac-type result by R\"odl, Ruci\'nski, and Szemer\'edi in \cite{RRSApprox} and confirms, as a corollary, a P\'osa-type condition conjectured by Sch\"ulke for every uniformity in \cite{Schuelke}.

To state the main theorem, let $G$ be a $k$-graph. For every $A\in\binom{V(G)}{k-2}$, 
we define
\[c_1(A)\le c_2(A)\le\cdots\le c_{\abs{V(G)}-k+2}(A)\]
to be the non-decreasing arrangement
of the multiset
$\mathcal{R}(A) := \{ d(A\cup\{x\}) : x\in V(G) \setminus A\}$.
Equivalently,  $c_1(A),\dots,c_{\abs{V(G)}-k+2}(A)$ form the ordinary degree sequence of the link graph $L(A)$. 
The \emph{link graph} $L(A)$ of $A$ is defined to be the graph on $V(G)\setminus A$ in which
$xy$ is an edge precisely when $A\cup\{x,y\}\in E(G)$. 

We are now ready to state the main theorem.

\begin{theorem}[Main]\label{thm:main}
Fix $k\ge 3$ and $0 < \alpha < 1/4$.  There exists
$n_0$ such that for every $n\ge n_0$ the following holds.
Let $G$ be a $k$-graph on the vertex set $[n]$.  Suppose, for
every $A\in\binom{[n]}{k-2}$ and every integer
$1\le i<\min\{\min A,(n-k+2)/2\}$,
at least one of the following holds:
\begin{lefttagged}
\begin{align*}
 c_i(A)&\ge i+\alpha n,
 \tag{$C_1$}\label{c1}\\
 c_{n-k+3-i-\lceil\alpha n\rceil}(A)
 &\ge n-k+2-i.
 \tag{$C_2$}\label{c2}
\end{align*}
\end{lefttagged}
Then, $G$ contains a (tight) Hamiltonian cycle.
\end{theorem}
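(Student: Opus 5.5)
The plan is to use the Hamilton-framework method of Lang and Sanhueza-Matamala. Their theorem says, roughly, that a $k$-graph property $\mathcal P$ guarantees tight Hamiltonicity for large $n$ provided two things hold. First, $\mathcal P$ is inherited by almost all small induced subgraphs on $m$ vertices, with $1/n\ll 1/m$ and with a slightly weaker slack parameter. Second, every $k$-graph with that weakened property contains a \emph{Hamilton framework}, that is, a tightly connected, aperiodic subgraph that has a perfect fractional matching. So I would fix $\mathcal P_\alpha$ to be the condition of \cref{thm:main}, with slack $\alpha$, and verify both axioms for $\mathcal P_{\alpha/2}$.

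\textbf{Inheritance.} Take a uniformly random $m$-subset $S$ and pass to the induced subgraph $G[S]$. For a fixed $(k-2)$-set $A\subseteq S$, the link degree in $G[S]$ of each $x\in S\setminus A$ is hypergeometric, and by Chernoff it is within $\alpha m/8$ of $d(A\cup\{x\})\,m/n$ with very high probability. Relabel the vertices of $S$ by their rank in $[n]$. Then the rank $\min A$ inside $S$ concentrates around $(m/n)\min A$, and the empirical quantiles of $\mathcal R(A)$ restricted to $S$ concentrate around the rescaled quantiles $c_{\lfloor in/m\rfloor}(A)\, m/n$. The main point to check is that for each $i<\min\{\min_S A,(m-k+2)/2\}$, if index $j\approx in/m$ satisfies ($C_1$) or ($C_2$) in $G$, then ($C_1$) or ($C_2$) holds in $G[S]$ with slack $\alpha/2$. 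The additive $\alpha n$ slack absorbs all rounding and concentration errors. A union bound over the $m^{k-2}$ sets $A$ finishes this step, using a Chernoff-type estimate with failure probability $e^{-\Omega(m)}$ per set.

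\textbf{Framework structure.} This is the step I expect to be the main obstacle. I would first derive structural consequences of the condition, in Chvátal style, for each link graph $L(A)$. If ($C_1$) fails at some $i$, then ($C_2$) gives about $i+\alpha n$ vertices of link degree at least $n-k+2-i$. Combining this with a Pósa/Chvátal-type closure argument, every link $L(A)$ with $\min A$ not too small should contain a connected "core" of size larger than $(1/2+\alpha/2)n$ that is non-bipartite. The reason is that degrees above $n/2$ are forced on more than half of the vertices, in the way Chvátal's sequence forces a near-Dirac dense part. Overlap of these cores for $(k-2)$-sets differing in one vertex then yields tight connectivity of a dominant component; I would argue by a shifting argument from $A$ to $A'$. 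Non-bipartiteness of the cores produces odd closed walks in the links, which give aperiodicity. The sets $A$ with small $\min A$ are unconstrained, and I must check they do not spoil this. Since $\min A\ge i$ is required only for $i<\min A$, the constraint in fact becomes strongest when $A$ consists of large labels, and one chooses tight components through such $A$.

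\textbf{Perfect fractional matching.} Finally I would show that the tight component $H$ found above has a perfect fractional matching, via LP duality/Farkas. If none exists, there is a weight function $w:V\to\mathbb R$ with $\sum_v w(v)>0$ but $\sum_{v\in e}w(v)\le 0$ on every edge of $H$. Sort the vertices by weight and pick $A$ among the heaviest $k-2$ vertices. The degree-sequence condition in $L(A)$, applied at the index $i$ equal to the number of "heavy" vertices, then forces an edge containing $A$ whose total weight is positive, a contradiction. This is analogous to how Chvátal's condition rules out the obstruction $K_{i,n-i}$-plus-stuff. This LP/Chvátal interplay, together with the tight-connectivity step, is where I expect most of the technical work, and I would first attempt to treat the extremal index $i$ where ($C_1$) first fails.
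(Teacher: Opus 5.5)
Your overall route (Lang--Sanhueza-Matamala frameworks, inheritance by concentration on random $s$-sets, then verifying the axioms on the small graphs) is the paper's route. Your inheritance sketch is essentially fine, though it must be run for random $s$-sets containing an arbitrary fixed $2k$-set, because the framework theorem needs $\delta_{2k}$ of the property graph to be large; the argument is otherwise unchanged. The gaps are in the axiom verification, which is where the substance of the proof lies.

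\textbf{Connectivity and consistency.} Your description of a framework omits the consistency axiom (F4). Your plan also only yields a ``dominant component'' through $(k-2)$-sets with large labels, and you explicitly leave open whether $(k-1)$-sets with small minimum spoil it. For such an $F(H)$, neither (F4) nor a perfect fractional matching of $F(H)$ is automatic; the latter must cover all $s$ vertices using only edges of $F(H)$. The missing idea is that \emph{every} $(k-1)$-set lies in a single component, so $F(H)=H$ works and (F4) is trivial. For $A\subseteq U$ the hypothesis on $L(A)$ covers all $i<\abs{L(A)}/2$, so it implies Chv\'atal's condition. Hence $L(A)$ is Hamiltonian (no core analysis is needed), and all $(k-1)$-subsets of $U$ lie in one component $\cC$. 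Then use descending induction on $\min S$. Let $\min S=i$, $A=S\setminus\set{i}$, and let $I$ be the component of $i$ in $L(A)$. If $I$ contains a label $j>i$, a path in $L(A)$ joins $S$ to $A\cup\set{j}$, which lies in $\cC$ by induction. If instead $I\subseteq[i]$, both alternatives fail at $i_0=\abs{I}\le i<\min A$: we have $c_{i_0}(A)\le i_0-1$, and no vertex of $L(A)$ has degree $\ge\abs{L(A)}-i_0$.

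\textbf{Space.} The fractional-matching step as you describe it would fail. The hypothesis on $L(A)$ is only available at indices $i<\min A$ in the \emph{label} order, and the heaviest vertices may carry small labels; if label $1$ is among them, nothing at all is assumed about $L(A)$. Moreover, one application at a single index is not enough. The paper's device, in your sign convention, is as follows. Order $v_1,\dots,v_s$ by decreasing weight. For each $j$ with $k-1\le j<(s+k-2)/2$, let $A$ be the $k-2$ vertices of $J=\set{v_1,\dots,v_j}$ with the \emph{largest labels}, so that $\min A\ge j-k+3>i:=j-k+2$. Either alternative at $i$ then produces an edge $A\cup\set{z,u}$ with $z\in J\setminus A$ and $u$ of weight rank at most $s-i$. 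This gives $w(v_{j-k+2})+\dots+w(v_j)+w(v_{s-i})\le 0$. Next check that the number $\ell$ of positive-weight vertices is at most $(s+k-2)/2$. Summing these inequalities over $k-1\le j\le\max\set{k-1,\ell}$ then gives $\sum_v w(v)\le 0$, because the indices $s-i$ are distinct and lie beyond the positive range.

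\textbf{Aperiodicity.} Your idea is sound in spirit. With your $\alpha/2$ slack the upper-half links are even non-bipartite, which avoids the paper's parity assumption on $s-k+2$. It still has to be implemented as coordinate swaps. For an ordered edge $e\subseteq U$ and $a,b\in e$, an odd closed walk in $L(e\setminus\set{a,b})$ reachable from $ab$ gives a tight walk of order $0\pmod k$ that exchanges the positions of $a$ and $b$. Then $k-1$ swaps produce a cyclic shift, and one appended vertex gives a closed walk of order $1\pmod k$.
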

\noindent
Equivalently, \eqref{c2} says that at least $i + \lceil\alpha n\rceil$ vertices
$x\in V(G)\setminus A$ satisfy
$d(A\cup\{x\}) \ge n-k+2-i$.

In \cref{sec:sharp}, we show the conditions in \cref{thm:main} are sharp in a certain sense.
\cref{thm:main}, in addition,  implies the following P\'osa-type result.

\begin{corollary}[P\'osa-type]\label[corollary]{cor:posa}
Fix $k\ge 3$ and $0 < \alpha < 1/4$.  There exists
$n_0$ such that for every $n\ge n_0$ the following holds.
Let $G$ be a $k$-graph on the vertex set $[n]$.
Suppose, for
every $S\in\binom{[n]}{k-1}$,
\begin{lefttagged}
\begin{align*}
d(S)&\ge \min\{\min S,(n-k+2)/2\}+\alpha n.
\tag{$P$}\label{p}
\end{align*}
\end{lefttagged}
Then, $G$ contains a (tight) Hamiltonian cycle.
\end{corollary}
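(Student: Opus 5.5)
We derive \cref{cor:posa} directly from \cref{thm:main} by showing that \eqref{p} forces \eqref{c1} for every relevant pair $(A,i)$. Take the same $k$ and $\alpha$, and let $n_0$ be the one supplied by \cref{thm:main}. Fix $A\in\binom{[n]}{k-2}$ and an integer $1\le i<\min\{\min A,(n-k+2)/2\}$. It suffices to prove $c_i(A)\ge i+\alpha n$; then \eqref{c1} holds for every such pair, and \cref{thm:main} yields a tight Hamiltonian cycle.

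The key step is a counting argument on the degree sequence of the link graph $L(A)$. The values $c_1(A)\le\dots\le c_{n-k+2}(A)$ are the numbers $d(A\cup\{x\})$ for $x\in[n]\setminus A$. Suppose, for contradiction, that $c_i(A)<i+\alpha n$. Then at least $i$ vertices $x\notin A$ satisfy $d(A\cup\{x\})<i+\alpha n$. Since only $i-1$ positive integers are smaller than $i$, one of these vertices, call it $x$, has $x\ge i$.

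Now apply \eqref{p} to $S=A\cup\{x\}$. Every element of $A$ exceeds $i$, because $i<\min A$, and we also have $x\ge i$. Hence $\min S\ge i$. Moreover $i<(n-k+2)/2$, so $\min\{\min S,(n-k+2)/2\}\ge i$. Condition \eqref{p} therefore gives $d(S)\ge i+\alpha n$, contradicting the choice of $x$. So $c_i(A)\ge i+\alpha n$, as required.

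This argument has no real obstacle. The one point that needs care is the pigeonhole step: it must produce a low-codegree vertex whose label is at least $i$, and this is exactly what makes the restriction $i<\min A$ in \cref{thm:main} compatible with the use of $\min S$ in \eqref{p}. Condition \eqref{c2} is never used.
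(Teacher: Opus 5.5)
Your proof is correct and is essentially the paper's argument. The paper also shows that \eqref{p} forces \eqref{c1}: every $j\in\{i,\dots,n\}\setminus A$ has $d(A\cup\{j\})\ge i+\alpha n$, and there are $n-k+3-i$ such $j$. That count is just the complementary form of your pigeonhole step, where at least $i$ vertices have low codegree but at most $i-1$ labels lie below $i$.
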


\begin{proof}
Fix $A\in\binom{[n]}{k-2}$ and $1\le i<\min\{\min A,(n-k+2)/2\}$. 
If $c_i(A) < i + \alpha n$, then there are at most $n-k+2-i$ elements in $\mathcal{R}(A)$ that are at least $i + \alpha n$. However, condition \eqref{p} implies that $d(A\cup\{j\}) \ge i + \alpha n$ for every $j \in \set{i, \dots, n} \setminus A$, whereas $|\set{i, \dots, n} \setminus A| = n - k + 3 - i > n - k + 2 - i$, a contradiction.
Hence, \eqref{c1} is always satisfied.
\end{proof}

We remark that the condition \eqref{p} in \cref{cor:posa} can be made slightly sharper by replacing $(n-k+2)/2$ with $\ceil{(n-k+2)/2} - 1$ (its proof remains the same). 
For $k=3$, \cref{cor:posa} recovers Sch\"ulke's theorem
\cite[Theorem~1.4]{Schuelke}.  For arbitrary fixed $k$, it confirms the P\'osa-type extension conjectured in Section~7 of the same paper.
In \cref{sec:strict}, we show \cref{thm:main} is strictly stronger than \cref{cor:posa} by examples which always satisfy \eqref{c1} or \eqref{c2} but never \eqref{p} (no matter how one orders the vertices).

\cref{thm:main} extends the sufficient-condition aspect of Chv\'atal's theorem. Its characterisation aspect behaves differently.
A function $D:\binom{[n]}{k-1} \rightarrow \set{0,\dots,n-k+1}$ is 
\emph{Hamiltonian} if every $k$-graph $G$ on $[n]$ satisfying $d_G(S)\ge D(S)$ for every $S\in\binom{[n]}{k-1}$ contains a Hamiltonian cycle.
We also call a Hamiltonian function a \emph{Hamiltonian sequence} for $k = 2$ and a \emph{Hamiltonian matrix} for $k=3$.
For graphs, Chv\'atal's criterion in \cite{Chvatal} in fact provides an equivalent characterisation of all Hamiltonian sequences in the following sense:
if $D(i) \le D(j)$ whenever $i \le j$, then $D$ is a Hamiltonian sequence if and only if, for every $i < n/2$, we have $D(i) \ge i + 1$ or $D(n-i) \ge n-i$.
Sch\"ulke suggested in \cite{Schuelke} that a complete characterisation of all Hamiltonian matrices is very desirable.
However, we will give a pessimistic answer to this question by proving that, for $k \ge 3$, determining whether a function is Hamiltonian is NP-hard; see \cref{thm:hard}.
That is to say, unlike the $k=2$ case, we cannot expect an equivalent characterisation checkable in P  unless P $=$ NP. In fact, in \cref{sec:hard}, we prove \cref{thm:alpha-hard}, which is a stronger version of \cref{thm:hard}.
\begin{theorem}[Hardness]\label{thm:hard}
For any fixed $k \ge 3$, determining whether a function $D:\binom{[n]}{k-1} \rightarrow \set{0,\dots,n-k+1}$ is 
Hamiltonian is NP-hard.   
\end{theorem}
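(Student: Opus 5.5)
We will prove \cref{thm:hard} by a polynomial-time reduction from an NP-complete problem to the complement of Hamiltonicity of functions. The problem is "is $D$ non-Hamiltonian?", i.e.\ does there exist a $k$-graph $G$ on $[n]$ with $d_G(S)\ge D(S)$ for all $S$ and no tight Hamiltonian cycle. Since adding edges preserves the degree condition, $D$ is non-Hamiltonian if and only if some maximal witness exists. The natural source problem is Hamiltonicity itself: given a $k$-graph $H$ on $[m]$, decide whether $H$ has a tight Hamiltonian cycle. This is NP-complete for $k\ge3$, for instance by a standard reduction from graph Hamiltonicity via blowing up each vertex into a run of consecutive vertices. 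If it is not already available, we would cite or sketch this. The aim is to encode $H$ into a function $D_H$ such that $D_H$ is Hamiltonian if and only if $H$ is \emph{not} Hamiltonian, or more conveniently the contrapositive: every $G$ satisfying $D_H$ contains a copy of $H$-like structure, and Hamiltonicity of all such $G$ is controlled by $H$. That shows deciding Hamiltonicity of functions is coNP-hard and hence NP-hard under Turing reductions.

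The cleanest encoding is to choose $D_H(S)=d_H(S)$ for every $S\in\binom{[m]}{k-1}$ and take $n=m$. Then $H$ itself satisfies the degree condition, so if $H$ is non-Hamiltonian then $D_H$ is non-Hamiltonian. The converse fails, since another $G$ with the same codegree lower bounds may be non-Hamiltonian even though $H$ is Hamiltonian. The key step is therefore to modify $H$ into a rigid instance $H'$ in which the codegree function forces the edge set. We want that any $G$ with $d_G(S)\ge d_{H'}(S)$ for all $S$ contains $H'$, or at least contains all edges relevant to a Hamiltonian cycle.

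The plan for rigidity is as follows. We pad $H$ with gadgets so that each $(k-1)$-set $S$ has either $D(S)=n-k+1$, which forces all edges containing $S$, or $D(S)=0$. A $k$-set $e$ is forced into $G$ exactly when some $(k-1)$-subset of it has full demand. We would therefore construct $H'$ on $n=\mathrm{poly}(m)$ vertices, for example by replacing each vertex by a block and using auxiliary vertices. The construction should satisfy two properties:
\begin{enumerate}[label=(\roman*)]
\item the forced $k$-sets form a hypergraph $F$ whose Hamiltonicity is equivalent to that of $H$;
\item adding arbitrary extra edges cannot create a Hamiltonian cycle.
\end{enumerate}
Property (ii) means $D$ is non-Hamiltonian if and only if the complete hypergraph minus nothing-forced pieces is non-Hamiltonian. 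So we must design the structure so that the full-demand sets alone determine the answer, and any additional $k$-set is "useless" because its vertices are constrained by a parity or independence obstruction.

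We expect the main obstacle to be achieving (ii). Full codegree on $S$ forces many edges, so $F$ tends to be dense and Hamiltonian. To handle this, we would first try a bipartition-type obstruction in the spirit of the space barrier in \cref{sec:sharp}. Take a large set $Y$ and require that the forced edges be exactly those meeting $Y$ in a prescribed way, so that any Hamiltonian cycle must traverse $X=V\setminus Y$ through edges encoding a tight Hamiltonian path system of $H$. Non-forced $k$-sets would then be those inside $X$ that are not edges of $H$. Because $D$ only lower-bounds codegrees, the witness $G$ would additionally take all edges of $H$ inside $X$ and nothing else. The remaining delicate point is to show that $D$ is non-Hamiltonian if and only if $H$ is non-Hamiltonian. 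For this we would argue that the minimal witness (the forced edges together with $E(H)$) is the only candidate up to adding non-edges of $H$ inside $X$, and that we can take the witness to include them or not as we choose.
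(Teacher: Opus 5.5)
Your proposal never reaches an actual construction, and the design it sketches cannot be completed as described.

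Your property~(ii) is impossible as stated: the complete $k$-graph satisfies every $D$ and is Hamiltonian. It is also unnecessary. If $D$ takes only the values $0$ and $n-k+1$, then by monotonicity a $k$-graph satisfies $D$ exactly when it contains the forced hypergraph $F$. So $D$ is Hamiltonian if and only if $F$ is, and the whole task is to make Hamiltonicity of $F$ encode a hard problem.

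The $F$ you aim for cannot come from a two-valued $D$. You want edges of $H$ inside $X$ forced and non-edges of $H$ inside $X$ not forced. But forcing a $k$-set $e\subseteq X$ needs a full-demand $(k-1)$-set $S\subset e$, and then $S\cup\set{x}$ is forced for every $x$, non-edges of $H$ included. Falling back on intermediate values such as $D(S)=d_H(S)$ brings back exactly the non-rigidity you identified in your second paragraph. Your final claim that the minimal witness is ``the only candidate'' is unsupported. You also switch between ``$D_H$ Hamiltonian iff $H$ is not'' and the reverse equivalence; the former would only yield coNP-hardness.

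The missing idea is to reduce from a source of uniformity below $k$, which is compatible with forcing through $(k-1)$-sets, and to calibrate the padding exactly. The paper proves the statement as the case $\alpha=0$ of \cref{thm:alpha-hard}, where $R=\emptyset$ and ordinary graph Hamiltonicity suffices. It takes a graph $G$ on $X$ with $\abs{X}=n$ even and adds a set $Y$ of exactly $(k-2)n/2$ vertices. It sets $D(S)=N-k+1$ if $S$ contains an edge of $G$ and $D(S)=0$ otherwise. Since $k-1\ge2$, $F$ consists of all $k$-sets containing an edge of $G$.

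\emph{Forward direction.} A Hamiltonian cycle of $G$, with $k-2$ vertices of $Y$ inserted into every other gap, is a tight Hamiltonian cycle of $F$.

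\emph{Converse.} In a tight Hamiltonian cycle of $F$, each of the $N$ windows $W_i$ contains at least two vertices of $X$. But $\sum_i\abs{W_i\cap X}=kn=2N$, so each window meets $X$ in exactly two vertices. Hence the window starting at $x_j$ meets $X$ in $\set{x_j,x_{j+1}}$, which are consecutive in the induced cyclic order and must be an edge of $G$. So $G$ is Hamiltonian.

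It is this tight double count, enabled by the precise size of $Y$, that your ``large set $Y$'' lacks. The same scheme works from tight Hamiltonicity of $(k-1)$-graphs with $(k-1)\mid n$ and $\abs{Y}=n/(k-1)$. A $k$-uniform source, as in your plan, does not mesh with this forcing mechanism.
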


%We show that, for every fixed uniformity at least three, recognising Hamiltonian codegree bounds is NP-hard, already for functions taking only two values. Thus a complete characterisation with a polynomial-time test would imply $\mathrm{P}=\mathrm{NP}$.

%
%Chv\'atal's classical theorem in \cite{Chvatal} in fact provides an equivalent characterisation of all Hamiltonian sequences in the following sense:
%
%If $D(i) \le D(j)$ whenever $i \le j$, then $D$ is a Hamiltonian sequence if and only if, for every $i < n/2$, we have $D(i) \ge i + 1$ or $D(n-i) \ge n-i$.
%

We survey some established degree conditions for other spanning tight structures in hypergraphs. 
Pavez-Sign\'e,
Sanhueza-Matamala, and Stein proved an asymptotically sharp
minimum codegree condition for every bounded-degree spanning tight tree
\cite{PSSHypertrees}; they also obtained a minimum codegree theorem for powers
of tight Hamilton cycles and, more generally, for bounded-tree-width
spanning hypergraphs \cite{PSSPosaSeymour}.  More recently, Di Braccio,
Hearn, Lada, Neve, and the author determined the exact minimum codegree
threshold for a spanning tight component in a $4$-graph
\cite{DiBraccioEtAl}.
Bowtell and Hyde proved P\'osa-type vertex-degree conditions for
perfect matchings in $3$-graphs \cite{BowtellHyde}.
To the best of our knowledge, no Chv\'atal-type results appear to be known for other spanning structures in hypergraphs.

\subsection*{Proof strategy}
The proof of \cref{thm:main} uses the method of Hamilton frameworks by Lang and
Sanhueza-Matamala in \cite{LSMbandwidth}.
The notion of a Hamilton framework was initially introduced by the same authors in their study of minimum-degree conditions for (tight)
Hamilton cycles \cite{LSMminimum}, 
and the Hamilton-framework method was further developed in \cite{LSMspanning, LSMbandwidth}.

Roughly, under this method, we need to verify the following: first, the conditions \eqref{c1} and \eqref{c2} satisfy the four Hamilton framework axioms defined in \cite[Definition~3.1]{LSMbandwidth}; second, these conditions are inherited by many subgraphs of a host hypergraph. 

\subsection*{Organisation}

In \cref{sec:2}, we introduce the preliminaries and the Hamilton-framework machinery.  
In \cref{sec:proof}, we prove the main theorem.
In \cref{sec:strict}, we prove \cref{thm:main} is stronger than \cref{cor:posa}.
In \cref{sec:sharp}, we discuss the sharpness of the conditions in \cref{thm:main} and conjecture an exact version of it without $\alpha n$.
In \cref{sec:hard}, we prove that determining whether a function is Hamiltonian is NP-hard for every uniformity $k\ge3$.

\section{Preliminaries and Hamilton framework}\label{sec:2}

For a $k$-graph $G$, its \emph{line graph} has vertex set $E(G)$ with two hyperedges adjacent when they meet in exactly $k-1$ vertices.  
A subgraph of $G$ is \emph{connected} if it has no isolated vertices and its
edges induce a connected subgraph of $G$'s line graph.
A \emph{component} is an
edge-maximal connected subgraph.
A (tight) \emph{walk} is a vertex sequence,
with repetitions permitted, in which every $k$ consecutive terms form
an edge.
It is \emph{closed} when the same holds with cyclic indexing.
The \emph{order} of the walk is the length of the sequence.
A \emph{fractional matching} of $G$ is a function
$\mu:E(G)\to[0,1]$ such that
$\sum_{e\ni v}\mu(e)\le1$ for every $v\in V(G)$, and its size is
$\sum_{e\in E(G)}\mu(e)$. 
We say $\mu$ is \emph{perfect} if its size is $|V(G)|/k$.
We follow the convention that 
$\min \emptyset = \infty$.

We use the following definition from
\cite[Definition~3.1]{LSMbandwidth}.

\begin{definition}[Hamilton framework \cite{LSMbandwidth}]\label[definition]{def:hamilton-framework}
Let $\cP$ be a family of $s$-vertex $k$-graphs.  The family $\cP$ admits a
\emph{Hamilton framework $F$} if $F$ assigns to every $H\in\cP$ an
$s$-vertex subgraph $F(H)\subseteq H$ such that:
\begin{enumerate}[label=\textup{(F\arabic*)}]
\item $F(H)$ is a component  (connectivity);
\item $F(H)$ has a perfect fractional matching (space);
\item \label{f3} $F(H)$ contains a closed walk whose order is congruent to $1$ modulo $k$ (aperiodicity);
\item $F(H) \cup F(H')$ is connected whenever $H, H' \in \cP$ are obtained by deleting distinct vertices from the same $(s+1)$-vertex $k$-graph (consistency).
\end{enumerate}
\end{definition}

The notion of property graphs was introduced by Lang \cite[Definition~2.6]{LangTiling} in the context of perfect tilings.

\begin{definition}[Property graph \cite{LangTiling}]
Let $G$ be an $n$-vertex $k$-graph and $\cP$ be a family of $s$-vertex $k$-graphs with $s \le n$.
Define the \emph{property $s$-graph}
$P^{(s)}(G,\cP)$ on $V(G)$ by
\[
 E(P^{(s)}(G,\cP))=\{S\in\tbinom{V(G)}s:G[S]\in\cP\}.
\]
\end{definition}

We use the following specialised consequence of
\cite[Theorem~3.5]{LSMbandwidth}. Basically, the following theorem says if $G$ satisfies the property $\mathcal{P}$, which admits a Hamilton framework, in a robust local way, then $G$ contains a Hamiltonian cycle.
\begin{theorem}[Hamilton-framework theorem \cite{LSMbandwidth}]\label{thm:framework}
Fix $k\ge 2$ and $s\ge 2k+1$.  Let $\cP$ be a family of $s$-vertex $k$-graphs
that admits a Hamilton framework.  There exists $n_0$ such that
every $n$-vertex $k$-graph $G$ with $n\ge n_0$ satisfying
\begin{equation}
 \delta_{2k}(P^{(s)}(G,\cP))
 \ge(1-s^{-2})\binom{n-2k}{s-2k},
 \label{eq:robust}
\end{equation}
contains a (tight) Hamiltonian cycle.
\end{theorem}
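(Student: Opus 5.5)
This theorem is quoted from the literature rather than proved from scratch, so the plan is to obtain it from \cite[Theorem~3.5]{LSMbandwidth} by checking that its hypotheses specialise to ours. I would first restate that result in its original form. A family $\cP$ of $s$-vertex $k$-graphs admitting a Hamilton framework is \emph{Hamilton-robust}, in the sense that any $k$-graph whose property graph $P^{(s)}(G,\cP)$ has minimum $2k$-degree close to complete contains a tight Hamiltonian cycle. I would then match its quantifiers with those of \cref{thm:framework}. The constant $s\ge 2k+1$ is fixed before $n_0$. The ``close to complete'' threshold in \cite{LSMbandwidth} is at least as weak as $(1-s^{-2})\binom{n-2k}{s-2k}$. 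The required framework axioms are exactly (F1)--(F4) of \cref{def:hamilton-framework}.

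The main check is that our normalisation agrees with theirs. The minimum $2k$-degree of an $s$-graph counts the $s$-sets containing a fixed $2k$-set, so the complete value is $\binom{n-2k}{s-2k}$. I would need to confirm that $s^{-2}$ is no larger than the slack their theorem allows, possibly after taking $n_0$ large. If their theorem is phrased with a general deficiency parameter $\varepsilon$ satisfying $1/n\ll\varepsilon\ll 1/s$, or with the threshold stated only for $s$ sufficiently large, there is a gap. I would close it by passing from $s$-sets to $s'$-sets for a larger $s'$. This uses the standard averaging fact that a near-complete $2k$-degree in $P^{(s)}$ is inherited by $P^{(s')}$ for the property ``every $s$-subset lies in $\cP$, restricted to the framework structure''. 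Verifying that this enlarged family still admits a Hamilton framework is the step I expect to be the real obstacle. The most delicate axiom is consistency (F4), since gluing frameworks across overlapping $s$-subsets requires exactly the connectivity that (F1) and (F4) provide.

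For completeness, I would also indicate why the underlying theorem holds. It rests on three ingredients. First, (F1) and (F4) together with the robust $2k$-degree show that the framework components $F(G[S])$ over all $S\in E(P^{(s)})$ glue into a single tight component of $G$ covering almost everything. Second, (F2) and the near-completeness supply, via LP duality, an almost perfect fractional matching in that component, which is turned into an almost spanning blow-up cover by regularity. Third, (F3) removes divisibility obstructions for closing walks, so that connecting paths of every residue modulo $k$ exist. An absorbing or blow-up argument then completes the Hamiltonian cycle.

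In the paper itself only the first two paragraphs are needed. The remaining effort goes into showing, in \cref{sec:proof}, that the family defined by \eqref{c1}/\eqref{c2} on $s$-vertex subgraphs admits a Hamilton framework, and that \eqref{eq:robust} holds for $G$.
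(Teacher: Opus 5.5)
Your plan matches the paper, which gives no proof of its own and simply presents the statement as a specialised consequence of \cite[Theorem~3.5]{LSMbandwidth}, with (F1)--(F4) copied from \cite[Definition~3.1]{LSMbandwidth}; the citation in your first two paragraphs is all that is used, and your sketch of the underlying proof is extra. Your fallback of passing to $s'$-sets with $s'>s$ is not needed, and it would not have helped anyway: requiring every $s$-subset of an $s'$-set to lie in $\cP$ can push the deficiency up to order $\binom{s'}{s}s^{-2}$, which moves away from the target $s'^{-2}$ rather than below it.
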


We define $\cP(s)$ to be the family of $s$-vertex $k$-graphs $H$ for which there is a labelling $V(H) = [s]$ such that
for
every $A\in\binom{[s]}{k-2}$ and every integer
$1\le i<\min\{\min A,(s-k+2)/2\}$,
at least one of the following holds:
\begin{lefttagged}
\begin{align*}
 c_i(A)&\ge i+ 1,
 \tag{$D_1$}\label{d1}\\
 c_{s-k+2-i}(A)
 &\ge s-k+2-i.
 \tag{$D_2$}\label{d2}
\end{align*}
\end{lefttagged}
Equivalently, \eqref{d2} says at least $i + 1$ vertices $x\in V(H)\setminus A$ satisfy 
$d(A\cup\{x\}) \ge s-k+2-i$. 

We shall see, for every sufficiently large $s$ such that $s-k+2$ is odd, $\cP(s)$ admits a Hamilton framework by simply taking $F(H) = H$ for every $H \in \cP(s)$. 
The oddness condition on $s-k+2$ is imposed only to simplify the proof of \ref{f3}, namely, that $H$ contains a closed walk whose order is congruent to $1$ modulo $k$, but we believe that, even without the oddness condition, such an $H$ will still satisfy \ref{f3} when $k\ge3$.

\section{Proof of the main theorem}\label{sec:proof}

Throughout this section we fix $k\ge3$.
For convenience, let $U:=\{\lceil (s-k+2)/2\rceil,\ldots,s\}$ denote the upper half of $[s]$.

\cref{lem:connected} to \cref{lem:aperiodic} establish the four framework axioms for $\cP(s)$.

\begin{lemma}[Connectivity]\label[lemma]{lem:connected}
For every sufficiently large $s$ and every $H\in\cP(s)$,
every $(k-1)$-subset of $V(H)$ lies in the same component. In particular, $H$ is a component.
\end{lemma}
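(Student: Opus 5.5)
We must show that any two $(k-1)$-sets are tightly connected in $H\in\cP(s)$, i.e.\ lie in edges joined by a tight walk (sequence of edges consecutive ones sharing $k-1$ vertices). The key local structure is that two $(k-1)$-sets $A\cup\{x\}$ and $A\cup\{y\}$ lie in a common edge precisely when $xy\in L(A)$. So $A\cup\{x\}$ and $A\cup\{y\}$ are in the same component whenever $x,y$ lie in the same nontrivial component of $L(A)$. Once this swap operation is available for enough pairs, any $(k-1)$-set can be transformed into any other one vertex at a time.

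\textbf{Step 1: the link graphs are connected on the relevant vertices.} I would first show that for every $A\in\binom{[s]}{k-2}$ with $\min A$ large (say $\min A\ge s/2$ roughly, i.e.\ $A\subseteq U$ after slight adjustment), the link $L(A)$ has a component containing all vertices of positive degree, and that all but few vertices have positive degree. The condition on $A$ is imposed for all $i<\min\{\min A,(s-k+2)/2\}$, so for such $A$ it is a genuine Chv\'atal condition on the $(s-k+2)$-vertex graph $L(A)$ (with $D_1,D_2$ being exactly Chv\'atal's inequalities $d(i)\ge i+1$ or $d(m-i)\ge m-i$, $m=s-k+2$). By \cref{thm:classic}, $L(A)$ is Hamiltonian, hence connected spanning. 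For general $A$ the condition is only imposed for $i<\min A$, so $L(A)$ may be disconnected; I would only use links of sets $A\subseteq U$ (possibly a slightly shrunk upper part), whose number is ample since $|U|\ge s/2\gg k$.

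\textbf{Step 2: moving between $(k-1)$-sets.} Call two $(k-1)$-sets equivalent if they lie in the same component (every $(k-1)$-set must first be shown to lie in some edge; this follows since every $A\cup\{x\}$ with $A\subseteq U$ has positive degree by Hamiltonicity of $L(A)$, and a general $(k-1)$-set $S$ contains... here is a subtlety). By Step 1, for $A\subseteq U$ all sets $A\cup\{x\}$, $x\notin A$, are equivalent. Hence any two $(k-1)$-sets meeting $U$ in at least $k-2$ common-style elements are connected, and by exchanging the $U$-part one element at a time (using $A,A'\subseteq U$ differing in one element and a common $(k-1)$-set $A\cup A'$ or $A\cup\{x\}$ with $x$ suitable), all $(k-1)$-sets containing at least $k-2$ vertices of $U$ are equivalent.

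\textbf{Step 3 (main obstacle): $(k-1)$-sets with few vertices in $U$.} For $S$ with many small-labelled vertices, I would induct on $|S\setminus U|$: pick $A\subseteq S$ of size $k-2$ containing the largest elements of $S$, with $m=\min A$; the condition holds for $i<\min\{m,(s-k+2)/2\}$, and I want to show the component of $L(A)$ containing the remaining vertex $x$ of $S$ also contains some vertex $y\in U$ (replacing $x$ by a larger vertex). The natural argument: if the component $C$ of $x$ had size $i+1\le$ small, then vertices in $C$ have degree $\le i$; using Chv\'atal-type counting with $D_1/D_2$ at index $i$ (valid as long as $i<m$) gives a contradiction, so $C$ is large and must meet $U\setminus A$. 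Also need $x$ non-isolated, the case $i=0$, which I would handle via $D$ at $i=1$ when $m>1$. The delicate point is when $|C|\ge m$, where the condition no longer applies; here I would instead choose $A$ so that $m$ is as large as possible (swap in the large vertices first) and argue that a component of size $\ge m$ among few small vertices must already contain a vertex larger than something in $A$, allowing the induction on the sum of labels to proceed. Making this bookkeeping airtight is where I expect the real work.
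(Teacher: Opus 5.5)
Your Steps 1 and 2 match the paper: for $A\in\binom{U}{k-2}$ the link $L(A)$ satisfies Chv\'atal's condition, so it is Hamiltonian, and chaining through $\binom{U}{k-1}$ puts every $(k-1)$-set containing such an $A$ into one component $\cC$. The gap is Step 3. You leave it open, and the way you set it up cannot be closed.

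Your target is that the component $C$ of $x$ in $L(A)$ meets $U\setminus A$, with induction on $|S\setminus U|$. The hypothesis at $A$ only constrains indices $i<\min A$. So when $\min A$ is small, $C$ can be a small clique on low labels that misses $U$ entirely, and this is compatible with $H\in\cP(s)$. Moreover, swapping $x$ for some $y\notin U$ does not decrease $|S\setminus U|$. The remedy you suggest (maximising $\min A$, plus unspecified bookkeeping) is not supplied. Your ``subtlety'' that every $(k-1)$-set lies in some edge is also left unresolved.

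The missing idea is to aim lower. Take $x=\min S$ and $A=S\setminus\{x\}$, so $\min A>x$, and ask only that $C$ contain some $y>x$. A path from $x$ to $y$ in $L(A)$ gives a tight walk from $S$ to $A\cup\{y\}$, and $\min(A\cup\{y\})>x$. So descending induction on $\min S$ closes the argument. The base case is $\min S\ge\lfloor (s-k+2)/2\rfloor$: then $S\setminus\{\min S\}\subseteq U$, which your Step 2 covers.

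If instead $C\subseteq[x]$, set $i_0:=|C|$. Then $i_0\le x<\min\{\min A,(s-k+2)/2\}$, so the hypothesis applies at index $i_0$, and both conditions fail:
\begin{itemize}
\item ($D_1$) fails, because the $i_0$ vertices of $C$ all have degree at most $i_0-1$, giving $c_{i_0}(A)\le i_0-1$.
\item ($D_2$) fails, because every vertex of $L(A)$ has degree at most $s-k+1-i_0$. Vertices outside $C$ have no neighbours in $C$, and vertices in $C$ have degree at most $i_0-1\le s-k+1-i_0$.
\end{itemize}

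This dichotomy removes the difficulties you flagged:
\begin{itemize}
\item The case you found delicate, $|C|\ge\min A$, never needs the hypothesis. Such a $C$ cannot lie inside $[x]$, so it already contains some $y>x$.
\item The case $C=\{x\}$ is covered, so $x$ is never isolated. This settles the edge-coverage subtlety.
\end{itemize}

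Your closing remark about inducting on label sums points the right way. However, this step is the actual content of the lemma, and it has to be carried out rather than deferred.
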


\begin{proof}
For every $A\in\binom U{k-2}$, the condition of \cref{thm:classic} is satisfied by $L(A)$, and therefore $L(A)$ is connected. 
In particular, every $(k-1)$-tuple containing $A$ belongs to the same component.
Namely, for $S,S'\in\binom {[s]}{k-1}$, if $S \cap S' \in \binom{U}{k-2}$, then they belong to the same component. Since for every $S,S'\in\binom {U}{k-1}$, there is a sequence of $(k-1)$-tuples $S_0, \dots, S_{\ell}$ in $\binom {U}{k-1}$ with $S_0 = S$ and $S_{\ell} = S'$ such that $|S_{i-1} \cap S_{i}| = k-2$, $S$ and $S'$ belong to the same component. We denote this component by $\cC$.

We now prove by descending induction on $i$ that every $(k-1)$-tuple $S$ with $\min S = i$ lies in $\cC$.  
The preceding paragraph proves the assertion for $i \ge \lfloor (s-k+2)/2 \rfloor$.
For $1\le i\le\lfloor (s-k+2)/2 \rfloor - 1$, let $A = S \setminus \set{i}$, and let $I$ be the component of $i$ in $L(A)$.  
\textbf{Case I}: $I$ contains some $j>i$. In this case, a path from $i$ to $j$ in $L(A)$ implies a tight walk from $S = A \cup \set{i}$ to $S' := A \cup \set{j}$. As $S'$ is contained in $\cC$ by the induction hypothesis, so is $S$.
\textbf{Case II}:
$I\subseteq[i]$. Let $i_0 = \abs{I}$. Then, $i_0 \le i < \min\set{\min A, (s-k+2)/2}$. Since at least $i_0$ vertices in $L(A)$ (the vertices in $I$) have degree at most $i_0-1$, we have $c_{i_0}(A) \le i_0 - 1$. As \eqref{d1} fails at $i_0$, \eqref{d2} implies, in particular, there is a vertex in $L(A)$ with degree at least $\abs{L(A)} - i_0$, but this is impossible as every vertex in $L(A)$ has degree at most $\abs{L(A)} - i_0 - 1$. Hence, this case is not possible.

Therefore, every $(k-1)$-tuple $S \subseteq V(H)$ lies in $\cC$.
\end{proof}

\begin{lemma}[Consistency]\label[lemma]{lem:consistency}
For every sufficiently large $s$,
$H \cup H'$ is connected whenever $H, H' \in \cP(s)$ are obtained by deleting distinct vertices from the same $(s+1)$-vertex $k$-graph.
\end{lemma}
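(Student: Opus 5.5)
Let $J$ be the $(s+1)$-vertex $k$-graph with $H=J-u$ and $H'=J-v$ for distinct vertices $u\ne v$. Both $H$ and $H'$ are induced subgraphs of $J$ on vertex sets of size $s$. The plan is to find one edge common to $H$ and $H'$, that is, an edge of $J$ avoiding both $u$ and $v$. By \cref{lem:connected}, $H$ is a single component in which every $(k-1)$-subset of $V(H)$ lies, and similarly for $H'$. So if some edge $e$ of $J$ avoids $\{u,v\}$, then $e$ lies in both $H$ and $H'$. It is then adjacent in the line graph to edges on both sides, which makes the union of the two line graphs connected. Even weaker suffices: it is enough to find a $(k-1)$-set $S\subseteq V(J)\setminus\{u,v\}$ that lies in some edge of $H$ and some edge of $H'$. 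Indeed, by \cref{lem:connected} every edge of $H$ is tight-connected to every edge of $H$ containing $S$, and an edge of $H$ containing $S$ and an edge of $H'$ containing $S$ meet in at least $k-1$ vertices. Such edges are either equal or adjacent in the line graph of $J$.

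The key step is therefore to produce an edge of $H$ avoiding $v$; such an edge avoids both $u$ and $v$. I would work inside $H$ with its good labelling $V(H)=[s]$ and take $A\in\binom{U}{k-2}$ with $v\notin A$, which is possible since $|U|\ge k-1$ for large $s$. In the proof of \cref{lem:connected} we saw that $L_H(A)$ satisfies Chv\'atal's condition, \cref{thm:classic}, on $s-k+2$ vertices. Hence $L_H(A)$ is Hamiltonian with at least $s-k+2\ge 3$ edges on a Hamiltonian cycle. Deleting $v$ removes at most two of these cycle edges, so some edge $xy$ of $L_H(A)$ avoids $v$. Then $e=A\cup\{x,y\}$ is an edge of $H$ not containing $v$, so $e\in E(H)\cap E(H')$.

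To finish, every edge of $H$ lies in the component of $H$ (by \cref{lem:connected}), which contains $e$. Likewise every edge of $H'$ lies in the single component of $H'$, which also contains $e$. Hence all edges of $H\cup H'$ are tight-connected through $e$, and $H\cup H'$ has no isolated vertices since each of $H,H'$ has none. So $H\cup H'$ is connected.

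The only potential obstacle is whether ``component'' in \cref{lem:connected} is meant inside $H$, with adjacency in $H$'s line graph. That is indeed the meaning, and line-graph adjacency is inherited by $H\cup H'$, so the argument should go through. The remaining bookkeeping is to check that $v\notin A$ can be arranged and that the Hamiltonian cycle has enough edges, both of which are immediate for large $s$.
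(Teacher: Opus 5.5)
Your proof is correct and is essentially the paper's argument. The paper glues the two components through a $(k-1)$-subset of $V(H)\cap V(H')$, which by \cref{lem:connected} lies in an edge of $H$ and in an edge of $H'$; this is exactly your ``even weaker suffices'' observation, so your extra step of finding a common edge from a Hamiltonian cycle of $L_H(A)$ is valid but not needed.
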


\begin{proof}
This follows immediately from \cref{lem:connected}: the component $\cC$ that contains a $(k-1)$-subset of $V(H) \cap V(H')$ is the unique component of $H \cup H'$.
\end{proof}

A \emph{fractional vertex cover} is a function
$w:V(H)\to[0,1]$ such that $\sum_{v\in e}w(v)\ge1$ for every
$e\in E(H)$; its size is $\sum_{v\in V(H)}w(v)$.
Strong duality of linear programming states that the maximum size of a fractional
matching always equals the minimum size of a fractional vertex cover.

\begin{lemma}[Space]\label[lemma]{lem:space}
For every sufficiently large $s$,
every $H\in\cP(s)$ has a perfect fractional matching.
\end{lemma}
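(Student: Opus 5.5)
The plan is to argue by linear programming duality. Since every vertex of $H$ lies in some edge (each $(k-1)$-set lies in the component $\cC$ of \cref{lem:connected}), it suffices to show that every fractional vertex cover $w$ of $H$ has size at least $s/k$. Fix such a $w$. Our only strong structural input is that for every $A\in\binom U{k-2}$ we have $\min A\ge \lceil (s-k+2)/2\rceil$. So the restriction $i<\min\{\min A,(s-k+2)/2\}$ is vacuous, and \eqref{d1}/\eqref{d2} are exactly Chv\'atal's condition for $L(A)$ for all $i<(s-k+2)/2$. By \cref{thm:classic}, $L(A)$ therefore has a Hamiltonian cycle, of odd length $m:=s-k+2$ by the parity assumption on $s$.

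First, I will turn each such cycle into a cover inequality. Each edge $xy$ of $L(A)$ gives an edge $A\cup\{x,y\}$ of $H$, so $w(x)+w(y)\ge 1-w(A)$. Summing over the $m$ edges of the Hamiltonian cycle of $L(A)$, in which every vertex of $V(H)\setminus A$ appears twice, gives
\[
 w(V(H)) \;\ge\; w(A)+\frac{m}{2}\bigl(1-w(A)\bigr).
\]
Next, I will take $A$ to be the $k-2$ vertices of $U$ of smallest $w$-weight, and put $t:=w(A)$ and $a:=\max_{v\in A}w(v)\ge t/(k-2)$. Then every vertex of $U\setminus A$ has weight at least $a$, which gives a second bound $w(V(H))\ge (|U|-k+2)\,a$.

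If $t\le 1-2/k+o(1)$, the displayed inequality already gives $w(V(H))\ge s/k$. The main obstacle is the regime where $t$ is large, i.e.\ close to $1$. Here the crude estimate $w(U)\gtrsim (s/2)\,t/(k-2)$ is too weak, since balancing the two bounds only yields about $s/(2(k-1))$. The remedy I would try first is to sharpen the cycle estimate using weights on both sides. Every edge $A\cup\{x,y\}$ with $x,y\in U\setminus A$ is already covered by $U$-weights, so the useful constraints are those with $x$ in the lower half $[s]\setminus U$. For such $x$, the Chv\'atal structure of $L(A)$ forces many neighbours $y$; in particular, whenever the low-degree condition \eqref{d1} fails, \eqref{d2} supplies at least $i+1$ vertices of degree at least $m-i$. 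I would pair each lower vertex $x$ with a partner $y$ along the Hamiltonian cycle and split the cycle into arcs inside $U$ and arcs through the lower half. On arcs inside $U$ I charge weight $\ge a$ per vertex. On arcs through the lower half I use $w(x)+w(y)\ge 1-t$, and in addition I vary $A$ over several $(k-2)$-subsets of $U$: swapping the heaviest vertex of $A$ for a light vertex of $U$ reduces $t$ while keeping the Hamiltonicity of $L(A)$. The aim is to show that the lower half must carry total weight at least roughly $(1-t)\cdot|[s]\setminus U|$, while $U$ carries at least $|U|\cdot t/(k-2)$; I would then check that the resulting combined lower bound is at least $s/k$ for all $t\in[0,1]$.

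If this balancing still falls short, the fallback is to construct the fractional matching directly. I would average, over all $A\in\binom U{k-2}$, the fractional matching of $H$ obtained by putting weight $1/2$ on each edge $A\cup\{x,y\}$ with $xy$ on the Hamiltonian cycle of $L(A)$, and then correct the resulting overload on $U$ versus underload on the lower half using edges whose $(k-2)$-set meets the lower half.
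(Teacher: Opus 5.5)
Your overall plan has a genuine gap, and it is not in the large-$t$ estimates but in the input. The main line of your argument uses the hypothesis only for $A\in\binom U{k-2}$, and that information provably cannot give the lemma. Let $m=s-k+2$, and let $H$ be the $k$-graph on $[s]$ whose edges are all $k$-sets meeting $[s]\setminus U$ in at most one vertex.

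For every $A\in\binom U{k-2}$, the link $L(A)$ is $K_m$ minus a clique on the $\lceil m/2\rceil-1$ vertices of $[s]\setminus U$. Hence $c_i(A)\ge\lfloor m/2\rfloor+1\ge i+1$ for all $i<m/2$, so \eqref{d1} always holds and $L(A)$ is Hamiltonian. On the other hand, weight $1/(k-1)$ on $U$ and $0$ elsewhere is a fractional vertex cover of size $\abs U/(k-1)\le (s+k)/(2(k-1))<s/k$. So $H$ has no perfect fractional matching.

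This $H$ sits exactly in your problematic regime $t=(k-2)/(k-1)>1-2/k$, and both of your bounds are attained up to lower-order terms. Your observation that balancing them gives only about $s/(2(k-1))$ is therefore the truth for the information you use, not a weakness of the estimate. Your target estimate that the lower half carries weight about $(1-t)\abs{[s]\setminus U}$ is false here: that half carries weight $0$. An edge $A\cup\set{x,y}$ with $y\in U$ says nothing about $w(x)$ once $w(y)\ge 1-t$. So no refinement of the arc charging, and no choice or swapping of $A$ inside $U$, can close the argument.

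$H$ is excluded from $\cP(s)$ only by sets $A$ with exactly one vertex in $[s]\setminus U$ and $\min A\ge2$. Such an $L(A)$ has isolated vertices and maximum degree about $m/2$, so both \eqref{d1} and \eqref{d2} fail at $i=1$. Your fallback gestures at edges whose $(k-2)$-set meets the lower half, but that correction step is the entire proof, and you give no mechanism for it. Two smaller points: the lemma carries no parity assumption on $s$, and you cannot swap a vertex of $A$ for a lighter vertex of $U$ when $A$ already consists of the $k-2$ lightest vertices of $U$.

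The missing idea is to apply the hypothesis at sets $A$ chosen according to the weight ordering, typically not inside $U$. The paper proceeds as follows.
\begin{itemize}
\item Set $x=w-1/k$, so every edge has nonnegative $x$-sum, and order $V(H)$ as $v_1,\dots,v_s$ by increasing $x$.
\item For each $k-1\le j<(s+k-2)/2$, let $A$ be the $k-2$ vertices of largest label in $J=\set{v_1,\dots,v_j}$. Then $i:=j-k+2=\abs{J\setminus A}<\min A$, so \eqref{d1} or \eqref{d2} applies at $i$.
\item If \eqref{d1} holds, some vertex of $J\setminus A$ has more than $i$ link-neighbours, hence one among $v_1,\dots,v_{s-i}$.
\item If \eqref{d2} holds, some vertex $u$ of rank at most $s-i$ has link-degree at least $m-i$. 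Then $u$ has a neighbour in $J\setminus A$, or, if $u\in J\setminus A$, a neighbour of rank at most $s-i$.
\item Either way this gives $x(v_{j-k+2})+\dots+x(v_j)+x(v_{s-j+k-2})\ge0$.
\item Summing over $k-1\le j\le\ell$, where $\ell\le(s+k-2)/2$ is the last index with $x(v_\ell)<0$, yields $\sum_v x(v)\ge 0$.
\end{itemize}
In the example above this is exactly what detects the barrier: the light vertices have small labels, so the chosen sets $A$ lie in the lower half.
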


\begin{proof}
Let $w$ be an arbitrary fractional vertex cover for $H$, and let $v_1, \dots, v_s$ be an ordering of $V(H) = [s]$ such that $w(v_1) \le \dots \le w(v_s)$.
Let $x(v_i) = w(v_i) - 1/k$ for $i \in [s]$. 
Then, $x(v_1) \le \dots \le x(v_s)$.
As $w$ is a vertex cover, for every $e \in E(H)$, we have
\begin{equation}
\sum_{v \in e}x(v) = \sum_{v \in e}(w(v) - 1/k) \ge 1 -1 = 0.
\label{eq:space}
\end{equation}
As a fractional matching is perfect if and only if it has size at least $s/k$,
by strong duality of linear programming, it suffices to prove 
$\sum_{v \in V(H)}x(v) = \sum_{v \in V(H)}(w(v) - 1/k) \ge s/k - s/k = 0$.

For an integer $j$, define
$f(j)=s - j +k-2$.
Then, when $k-1 \le j < (s+k-2)/2$, $j < f(j) \le s - 1$. In particular, when $j = \ceil{(s+k-2)/2} - 1$, $f(j) = \floor{(s+k-2)/2} + 1 \ge j + 1$; and when $j = k-1$, $f(j) = s -1$.
We have the following result.

\begin{claimblock}
\begin{nestedclaim}\label[nestedclaim]{claim:space}
For $k-1 \le j < (s+k-2)/2$, we have
$x(v_{j-k+2})+\dots+x(v_j)+x(v_{f(j)}) \ge 0$.
\end{nestedclaim}

\begin{proof}[Proof of \cref{claim:space}]
Let $k-1 \le j < (s+k-2)/2$ and $J = \set{v_i : i\in[j]}$. 
Let $A$ be set of the $k-2$ largest elements in $J$ (with respect to the original vertex labels in $[s]$) and $J_o = J \setminus A$ (with $\abs{J_o} = j - k + 2$).
By the definition of $A$,
$\min A \ge j - k + 3$. 
We also observe that, for every $w \in J_o$, we have
\begin{equation}
x(v_{j-k+2}) + \dots + x(v_j)
\ge
\sum_{v\in\set{w}\cup A} x(v).
\label{eq:space2}
\end{equation}
Set $i = j - k + 2 = \abs{J_o}$. Then
$i < \min\set{\min A, (s-k+2)/2}$.

If \eqref{d1} holds at $i$, then $c_i(A) \ge i + 1$, which implies there are at most $i-1$ vertices $v$ such that $d(\set{v} \cup A) \le i$. As $\abs{J_o} = i$, there is $w \in J_o$ such that 
$d(\set{w} \cup A) \ge i + 1$.
Hence, there is a vertex $u$ with $x(u) \le x(v_{s-i}) = x(v_{f(j)})$ such that $\set{u, w} \cup A$ is an edge. 
Then, \eqref{eq:space} and \eqref{eq:space2}
imply that 
\[
x(v_{j-k+2})+\dots+x(v_j)+x(v_{f(j)}) 
\ge
\sum_{v\in\set{w}\cup A} x(v) 
+ x(u)
\ge 0.
\]

If \eqref{d2} holds at $i$, then there are at least $i + 1$ vertices $v$ such that $d(\set{v} \cup A) \ge s - k + 2 - i$.
In particular, there is a vertex $u$ with $x(u) \le x(v_{s-i}) = x(v_{f(j)})$ such that
$d(\set{u} \cup A) \ge s - k + 2 - i = \abs{L(A)} - \abs{J_o}$.
If $u \notin J_o$, there is a vertex $w \in J_o$ such that $\set{u, w} \cup A$ is an edge.
By \eqref{eq:space} and \eqref{eq:space2}, we have
\[
x(v_{j-k+2})+\dots+x(v_j)+x(v_{f(j)}) 
\ge
\sum_{v\in\set{w}\cup A} x(v) 
+ x(u)
\ge 0.
\]
If $u \in J_o$, then, since
$d(\set{u} \cup A) \ge s - k + 2 - i \ge i + 1$,
there is a vertex $w$ with $x(w) \le x(v_{s-i}) = x(v_{f(j)})$ such that $\set{w, u} \cup A$ is an edge.
By \eqref{eq:space} and \eqref{eq:space2}, we have
\[
x(v_{j-k+2})+\dots+x(v_j)+x(v_{f(j)}) 
\ge
\sum_{v\in\set{u}\cup A} x(v) 
+ x(w)
\ge 0.
\]
\end{proof}
\end{claimblock}

If $x(v_1) \ge 0$, we have $\sum_{v \in V(H)}x(v) \ge 0$ immediately.
Otherwise, let $\ell = \max\set{i\in[s]:x(v_i) < 0}$. Then, we have $\ell \le (s+k-2)/2$.
Indeed, if $\ell \ge \floor{(s+k-2)/2} + 1$, then 
$x(v_{\floor{(s+k-2)/2} - k + 2}) + \dots + x(v_{\floor{(s+k-2)/2} + 1}) < 0$, which contradicts \cref{claim:space} with $j = \ceil{(s+k-2)/2} - 1$.
If $\ell < k-1$, \cref{claim:space} immediately implies
\[
\sum_{i\in[s]} x(v_i)
\ge 
x(v_1) + \dots + x(v_{k-1}) + x(v_{f(k-1)})
\ge 0.
\]
For the remainder of the proof, assume that $s+k$ is odd. When $s+k$ is even, the proof is almost identical and uses only the additional fact that $x(v_\ell) + x(v_s) \ge 0$ when $\ell$ is exactly $(s+k-2)/2$.
Now as $k-1 \le \ell < (s+k-2)/2$, we again have
\begin{align*}
\sum_{i\in[s]} x(v_i)
&\ge 
x(v_1) + \dots + x(v_\ell) + x(v_{f(\ell)}) + \dots + x(v_{f(k-1)})  \\
&\ge 
\sum_{k-1 \le j \le \ell} 
\Bigl(
x(v_{j-k+2})+\dots+x(v_j)+x(v_{f(j)}) 
\Bigr) \\
&\ge 
0.
\end{align*}
\end{proof}

An \emph{ordered edge} is an ordering of the vertices of
an edge. For example, if $\set{v_1, \dots, v_k}$ is an edge, then $e := (v_1, \dots, v_k)$ is an ordered edge. The parity assumption on $s-k+2$ enters only here.

\begin{lemma}[Aperiodicity]\label[lemma]{lem:aperiodic}
For every sufficiently large $s$ such that $s-k+2$ is odd, every $H\in\cP(s)$ contains a closed
walk of order $1$ modulo $k$.
\end{lemma}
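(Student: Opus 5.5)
Throughout, \emph{closed walk} means a closed tight walk in the sense of \cref{sec:2}. The plan is to use the Chv\'atal structure of one link graph to find an odd cycle there, and then to lift a cycle in a link graph to a closed walk in $H$ of controlled order. The case split on lengths is the heart of the argument.

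\textbf{Lifting link cycles.} Fix $A=\set{a_1,\dots,a_{k-2}}\in\binom{U}{k-2}$. As in the proof of \cref{lem:connected}, every $i<(s-k+2)/2$ satisfies $i<\min A$, so the hypothesis applies and $L(A)$ satisfies the Chv\'atal condition of \cref{thm:classic}; hence $L(A)$ is Hamiltonian. Its order $s-k+2$ is odd, so a Hamiltonian cycle $C=x_1x_2\cdots x_m$ of $L(A)$ is an odd cycle, $m=s-k+2$. Consider two walks built from $C$.

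First, for any edge $xy$ of $L(A)$, the sequence $a_1,\dots,a_{k-2},x,y$ repeated cyclically is a closed walk of order $k$, i.e.\ $0\bmod k$. Every $k$ consecutive terms are a cyclic shift of $A\cup\set{x,y}$, which is an edge.

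Second, I want a walk whose order is offset by the length of $C$. Insert the cycle into a block: $a_1,\dots,a_{k-2},x_1,x_2,\dots$ does not work directly, since a window would contain $x_1,x_2,x_3$, which need not form an edge with part of $A$. Instead, use the standard trick of alternating $A$-blocks with link edges. The walk
\[
(a_1,\dots,a_{k-2},x_1,x_2,\ a_1,\dots,a_{k-2},x_2,x_3,\ \dots)
\]
also fails, because windows straddling two blocks mix $x_2$ with $x_2$. So the plan is more careful. Take the closed walk $W_j$ formed by the blocks $(a_1,\dots,a_{k-2},y_t)$ for $t=1,\dots,j$, where $y_1y_2\cdots y_j$ is a closed walk in $L(A)$ of length $j$. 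Each window of $k$ consecutive terms is $A$ together with two consecutive $y_t,y_{t+1}$, in some cyclic order. This is exactly the closed walk obtained from a cyclic sequence in which $A$-blocks of length $k-2$ separate the link vertices. Its order is $j(k-1)$.

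\textbf{Choosing the length.} Closed walks $y_1\cdots y_j$ in $L(A)$ exist for every even $j\ge 2$ (traverse an edge back and forth) and for every odd $j\ge m$ (the cycle $C$ plus back-and-forth detours). So there are closed walks in $H$ of order $j(k-1)$ for all sufficiently large $j$. Since $\gcd(k-1,k)=1$, pick $j\equiv -1\pmod k$ with $j$ large; then $j(k-1)\equiv 1\pmod k$. Any residue of $j$ modulo $k$ is available among large $j$ because both parities are allowed, so one can take for instance $j\in\set{tk-1: t \text{ large}}$, of either parity. In fact even $j$ alone suffices when one exists in the required class. Since $j(k-1)\equiv -j$, we need $j\equiv -1\pmod k$. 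If $k$ is odd, $j=2k-1$ is odd and $j=k-1$ is even, so even $j=k-1$ already suffices. If $k$ is even, every $j\equiv -1\pmod k$ is odd, and here the odd cycle $C$ in $L(A)$, guaranteed by the oddness of $s-k+2$, is needed.

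\textbf{Main obstacle.} The delicate step is verifying that the block walk is genuinely tight, meaning every window of $k$ cyclically consecutive terms is an edge. The walk is $A,y_1,A,y_2,\dots$, with period $k-1$. A window of length $k$ contains one full copy of $A$ and the two link vertices $y_t,y_{t+1}$ on either side, or else it starts inside an $A$-block. In the latter case the window still consists of the $k-2$ elements of $A$ (a suffix of one block and a prefix of the next) together with exactly the two link vertices $y_t,y_{t+1}$ lying between them. So every window is $A\cup\set{y_t,y_{t+1}}\in E(H)$. I would also check that the degenerate case where consecutive $y$'s repeat never occurs, since they are adjacent in $L(A)$ and hence distinct.
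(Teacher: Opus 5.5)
Your block walk $W_j$ is not a tight walk, and the whole length count $j(k-1)$ rests on it. The sequence $a_1,\dots,a_{k-2},y_1,a_1,\dots,a_{k-2},y_2,\dots$ has period $k-1$. Consider the window of $k$ consecutive terms starting at $a_i$ in block $t$, with $1\le i\le k-2$. It is $a_i,\dots,a_{k-2},y_t,a_1,\dots,a_i$. So it contains only one link vertex and contains $a_i$ twice. It therefore has only $k-1$ distinct vertices and cannot be an edge. Only the windows starting at a link vertex have the form $A\cup\set{y_t,y_{t+1}}$.

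More generally, no sequence of period $k-1$ can be a tight walk, because $v_p=v_{p+k-1}$ always lie in a common window. A sanity check shows the conclusion is false as well as the proof. For $k=3$ and $j=2$ your construction is $a,x,a,y$. That would be a closed walk of order $4\equiv 1\pmod 3$ using only the single edge $\set{a,x,y}$. This is impossible: in a closed tight walk inside one $3$-edge, $v_{p+3}=v_p$, so the order must be divisible by $3$. (Also, the degeneracy to worry about is repetition of elements of $A$, not of consecutive $y$'s.)

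The missing idea is that as long as $A$ stays in fixed coordinates modulo $k$, lifting walks of $L(A)$ only ever gives orders $\equiv 0\pmod k$. The correct lift is $A,y_1,y_2,A,y_3,y_4,\dots$, with two link vertices per block of length $k$. To reach residue $1$ you must change which coordinates the vertices of an edge occupy. This, not a parity count of lengths, is where the odd Hamiltonian cycle of $L(A)$ is used in the paper.

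Walk once around an odd cycle $v_1\cdots v_m$ of $L(A)$, at each step replacing $v_{i-1}$ by $v_{i+1}$ in the ordered edge. This returns to the edge $A\cup\set{v_1,v_2}$ with $v_1,v_2$ in interchanged coordinates. Combine this with a walk in $L(A)$ from $xy$ to $v_1v_2$ and its coordinate-swapped reverse. This transposes $x$ and $y$ in any ordering of $A\cup\set{x,y}$ by a walk of order $0\pmod k$.

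Next pick an edge $e\subseteq U$. This is possible because $\abs{U\setminus A}=(m+1)/2>m/2$ forces two consecutive vertices of the Hamiltonian cycle into $U\setminus A$. Then every pair of vertices of $e$ can be transposed, since $e$ minus any pair lies in $\binom{U}{k-2}$. Composing transpositions rotates $(v_1,\dots,v_k)$ into $(v_k,v_1,\dots,v_{k-1})$ by a walk of order $0\pmod k$. Appending $v_k$ gives a closed walk of order $1\pmod k$.

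Your preliminary steps are correct: $L(A)$ is Hamiltonian for $A\in\binom{U}{k-2}$ by Chv\'atal's condition, and its Hamiltonian cycle is odd. They just need to feed into this coordinate-swapping mechanism instead of the period-$(k-1)$ construction.
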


\begin{proof}
We first observe that if $e= (v_1,\dots,v_k)$ and $e'=(v_1,\dots,v_{r-1},w,v_{r+1},\dots,v_k)$ are ordered edges, then their 
\emph{concatenation}
\[
ee' :=
v_1, \dots, v_k, v_1,\dots,v_{r-1},w,v_{r+1},\dots,v_k
\]
is a tight walk of order $0$ modulo $k$.

We denote $m = s - k + 2$, and recall $U=\{\lceil m/2\rceil,\ldots,s\}$.
We next prove the following claim.
\begin{claimblock}
\begin{nestedclaim}\label[nestedclaim]{claim:aperiodic-swap}
For every $A\in\binom{U}{k-2}$ and every $xy\in E(L(A))$, there is a
tight walk of order $0$ modulo $k$ from any ordering
of $A\cup\set{x,y}$ to the ordering obtained by interchanging $x$ and
$y$ and leaving every element of $A$ in its original coordinate.
\end{nestedclaim}

\begin{proof}[Proof of \cref{claim:aperiodic-swap}]

Fix $A \in \binom{U}{k-2}$, $xy\in E(L(A))$ and an ordering $f_1$ of $A \cup \set{x,y}$. We assume, in the ordered edge $f_1$, the coordinates of $x,y$ are $o_1, o_2$ respectively.

\cref{thm:classic} applied to $L(A)$ implies that $L(A)$ has a Hamiltonian cycle, say
$v_1 v_2\dots v_m$.
Let $e_1$ be the ordering $f_1$ but with $v_1, v_2$ in the places of $x,y$ respectively.
For $2 \le i \le m+1$, we define, recursively, $e_{i}$ to be the ordered edge obtained by replacing $v_{i-1}$ with $v_{i+1}$ in $e_{i-1}$, where subscripts on the $v_i$ are taken modulo $m$.
In particular, the underlying (unordered) edge of $e_{m+1}$ is again $A\cup\set{v_1,v_2}$, the same as the underlying edge of $e_1$.

Since $m$ is odd, 
the vertex order in $e_{m+1}$ is the same as $e_1$
except with $v_1$ and $v_2$ interchanged. Indeed, in $e_1$, the coordinates of $v_1$ and $v_2$ are $o_1$ and $o_2$, respectively. Then inductively, for $i \in [m+1]$, the coordinates of $v_{i}$ and $v_{i+1}$ are $o_1$ and $o_2$ if $i$ is odd and $o_2$ and $o_1$ if $i$ is even. $m+1$ is even, and $v_{m+1} = v_1$ and $v_{m+2} = v_2$.

In addition, $e_1 \dots e_{m+1}$ is a tight walk in $H$ of order $0$ modulo $k$.

As $L(A)$ is connected, there is a walk $w_1 w_2 \dots w_{t-1} w_t$ in $L(A)$ with $w_1 = x, w_2 =y, w_{t-1}=v_1, w_t=v_2$.
For $2 \le i \le t-1$, we define $f_{i}$ to be the ordered edge obtained by replacing $w_{i-1}$ with $w_{i+1}$ in $f_{i-1}$.
Then, $f_1 \dots f_{t-1}$ is a tight walk in $H$ of order $0$ modulo $k$, and $f_{t-1}$ is either $e_1$ or $e_{m+1}$.

For $i \in [t-1]$, we define $f_i'$ to be $f_i$ with $w_i$ and $w_{i+1}$ interchanged. 
Then, $f_{t-1}' \dots f_1'$ is a tight walk in $H$ of order $0$ modulo $k$, and $f_{t-1}'$ is  $e_{m+1}$ or $e_1$ when $f_{t-1}$ is $e_1$ or $e_{m+1}$, respectively. Assume without loss of generality that $f_{t-1} = e_1$.

Therefore, 
$f_1 \dots f_{t-1}  e_2 \dots e_{m} f_{t-1}' \dots f_1'$ is the desired tight walk in $H$.
\end{proof}
\end{claimblock}

Choose $A\in\binom{U}{k-2}$ and a Hamiltonian cycle in $L(A)$ as in the
proof of \cref{claim:aperiodic-swap}. Since $\abs{U\setminus A} = (m+1)/2 > m/2$,
two consecutive vertices $x,y$ of this Hamiltonian cycle lie in
$U\setminus A$.
Then, $e:=A\cup\set{x,y}\in E(H)$ and $e\subseteq U$.

We write $e=\set{v_1,\dots,v_k}$,
and for distinct $i,j\in[k]$, we denote
$A_{ij}:=e\setminus\set{v_i,v_j}$.
Then $A_{ij}\in\binom{U}{k-2}$ and
$v_iv_j\in E(L(A_{ij}))$. 
By \cref{claim:aperiodic-swap}, in any ordering of $e$, any two vertices of $e$ can be interchanged in their coordinates by a tight walk of order $0$ modulo $k$.
Starting with $(v_1,\dots,v_k)$ and successively interchanging $v_k$ with $v_{k-1},v_{k-2},\dots,v_1$ results in a tight walk terminating at $(v_k,v_1,\dots,v_{k-1})$. 
Since each interchange appends a multiple of $k$ vertices to the tight walk,
the final tight walk we obtain again has order $0$ modulo $k$. 

Appending one further vertex $v_k$ to the end yields a closed tight walk of order $1$ modulo $k$, as desired. 
\end{proof}

We next show that the global condition from \cref{thm:main} is inherited by many induced subgraphs on $s$ vertices.
We use the following form of Hoeffding's inequality.

\begin{lemma}[Hoeffding \cite{Hoeffding}]\label[lemma]{lem:hoeffding}
Let $a_1,\ldots,a_N\in[0,1]$, let $1\le m\le N$, and let $I$ be a uniformly random $m$-subset of $[N]$. 
Define $X=\sum_{i\in I}a_i$. 
Then for every $t>0$,
\[
\Pr\bigl(\abs{X-\mathbb E X}\ge t\bigr)
\le 2\exp\left(-\frac{2t^2}{m}\right).
\]
\end{lemma}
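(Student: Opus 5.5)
The plan is to reduce sampling without replacement to sampling with replacement, and then run the standard exponential-moment (Chernoff) argument together with Hoeffding's lemma for bounded summands.

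Let $\bar a:=\frac1N\sum_{i=1}^N a_i$, so that $\mathbb E X=m\bar a$. Let $J_1,\dots,J_m$ be independent, uniformly random elements of $[N]$, and set $Y:=\sum_{r=1}^m a_{J_r}$. Then $\mathbb E Y=m\bar a=\mathbb E X$ as well.

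\textbf{Step 1 (convex domination).} I would show that for every continuous convex $\varphi:\mathbb R\to\mathbb R$ we have $\mathbb E\varphi(X)\le\mathbb E\varphi(Y)$. This is Hoeffding's Theorem~4 in \cite{Hoeffding}. His argument conditions on the multiset of indices drawn with replacement and expresses $X$, in distribution, as an average: $X$ is the conditional expectation of a suitable symmetrised version of a with-replacement sum, after which Jensen's inequality applies. I expect this to be the main obstacle, since it is the only step that is not a one-line computation. The cleanest way is to cite Hoeffding's theorem directly, as the paper already attributes the lemma to \cite{Hoeffding}.

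If a self-contained route were needed, I would first try the martingale $M_r=\mathbb E[X\mid \text{first } r \text{ chosen indices}]$ under a uniformly random ordering of $I$. I would bound the conditional ranges of its increments and apply Hoeffding's lemma to each increment. Getting the constant exactly $2t^2/m$ this way requires care, however, because the range bounds are not obviously at most $1$ per step. For that reason the convex-order route is the preferred one.

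\textbf{Step 2 (moment generating function).} Apply Step~1 with $\varphi(x)=e^{\lambda(x-m\bar a)}$ for $\lambda\in\mathbb R$. This gives
\[
\mathbb E e^{\lambda(X-\mathbb EX)}\le \mathbb E e^{\lambda(Y-\mathbb EY)}=\prod_{r=1}^m\mathbb E e^{\lambda(a_{J_r}-\bar a)}\le e^{\lambda^2 m/8}.
\]
The last inequality is Hoeffding's lemma for a mean-zero random variable supported in an interval of length at most $1$. That lemma follows from convexity of $e^{\lambda x}$ on the interval together with the bound $\log(1-p+pe^{u})-pu\le u^2/8$, which is checked by a Taylor expansion.

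\textbf{Step 3 (Chernoff and the two tails).} By Markov's inequality, for $\lambda>0$,
\[
\Pr(X-\mathbb EX\ge t)\le e^{-\lambda t+\lambda^2 m/8}.
\]
Choosing $\lambda=4t/m$ gives the bound $e^{-2t^2/m}$. Applying the same argument to $-X$, which corresponds to taking $\lambda<0$ (equivalently, to the weights $1-a_i\in[0,1]$), bounds the lower tail by the same quantity. A union bound over the two tails then yields the factor $2$ and completes the proof.
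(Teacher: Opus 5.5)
Your proof is correct and is exactly Hoeffding's original argument: his Theorem~4 comparing sampling without and with replacement under convex functions, followed by Hoeffding's lemma and the Chernoff bound with $\lambda=4t/m$. The paper gives no proof of its own and simply cites Hoeffding, so this is the intended route. As a side remark, your fallback martingale route would also have given the constant $2$ directly. Conditionally on the first $r-1$ chosen indices, $M_r-M_{r-1}$ is an affine function of the value $a_j$ at the $r$-th chosen index $j$, with slope $1-\frac{m-r}{N-r}\in[0,1]$ (the case $m=N$ is trivial). Hence each increment has conditional range at most $1$, and Azuma--Hoeffding yields $\exp(-2t^2/m)$ for each tail.
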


\begin{lemma}[Inheritance]\label[lemma]{lem:inheritance}
Fix $0<\alpha<1/4$. There exists $s_0$ such that, for every integer $s\ge s_0$, there exists $n_0$ for which the following holds whenever $n\ge n_0$. 
Let $G$ be a $k$-graph on $[n]$ satisfying the hypotheses of \cref{thm:main}. 
We have
\[
\delta_{2k} \bigl(P^{(s)}(G,\cP(s))\bigr)
\ge(1-s^{-2})\binom{n-2k}{s-2k}.
\]
\end{lemma}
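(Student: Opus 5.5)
Fix a $2k$-set $T\subseteq[n]$. We want all but an $s^{-2}$-fraction of the $s$-sets $S\supseteq T$ to satisfy $G[S]\in\cP(s)$. Choose $S=T\cup R$, where $R$ is a uniformly random $(s-2k)$-subset of $[n]\setminus T$. Label $S$ by $[s]$ in the order inherited from $[n]$, so that $\sigma(v)$ is the rank of $v$ in $S$. The plan is to show that, with probability at least $1-s^{-2}$, every $A\in\binom{S}{k-2}$ and every admissible $i$ satisfy \eqref{d1} or \eqref{d2} in $G[S]$.

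\textbf{Step 1: concentration.} Fix $A\subseteq S$. By \cref{lem:hoeffding} (the at most $2k$ vertices of $T$ shift counts by $O(k)$) we expect three kinds of quantity to concentrate at scale $\epsilon s$, for a small $\epsilon\ll\alpha$. First, for each $x\in S\setminus A$, $d_{G[S]}(A\cup\{x\})\approx \frac{s}{n}d_G(A\cup\{x\})$. Second, for each threshold $\theta$, the number of $x\in S$ with $d_G(A\cup\{x\})\ge\theta$ is $\approx \frac sn\abs{\{x:d_G(A\cup\{x\})\ge\theta\}}$. Third, ranks scale: $\sigma(v)\approx \frac sn v$.

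There are too many choices of $A$ to union bound over all of them for the first quantity. Instead I will argue as follows. For a fixed $(k-1)$-set $A\cup\{x\}$, the probability of a deviation larger than $\epsilon s$ is $e^{-\Omega(\epsilon^2 s)}$. I then restrict to a union bound over the at most $s^{k-1}$ subsets of the random set $S$: conditioning on $A\cup\{x\}\subseteq S$, the remaining vertices are still uniformly random. This gives failure probability $s^{k-1}e^{-\Omega(\epsilon^2 s)}\le s^{-2}/3$ once $s\ge s_0$. The same union bound covers the count statements for $O(1/\epsilon)$ thresholds $\theta$ on a grid, and the rank statement needs only $O(s)$ events.

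\textbf{Step 2: transfer the condition.} On the good event, fix $A\in\binom S{k-2}$ and $1\le i<\min\{\min\sigma(A),(s-k+2)/2\}$. Set $i'=\lfloor in/s\rfloor$ (rounded so that it lies on the grid up to $\epsilon n$ error). Since $\min A\gtrsim \frac ns\sigma(\min A)>i'$ up to $\epsilon n$, and $i'<(n-k+2)/2$ similarly, the hypothesis of \cref{thm:main} applies to $(A,i')$, after shrinking $i'$ by $\epsilon n$ if needed, which costs only $\epsilon n\ll\alpha n$ of slack.

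If \eqref{c1} holds, then at most $i'-1$ vertices $x$ have $d_G(A\cup\{x\})<i'+\alpha n$. Sampling shows that at most about $i+\epsilon s$ vertices of $S$ have $d_{G[S]}<i+\alpha s-\epsilon s$. At this point the slack is not enough: a sampling error of $\epsilon s$ in the count could break $c_i(A)\ge i+1$. To fix this I use the monotone slack. Apply \eqref{c1} at the smaller index $i''=i'-\frac{\alpha}{2}n$ (if $i''\ge1$): then fewer than $i''$ vertices have degree below $i''+\alpha n\ge i'+\frac\alpha2 n$. In $S$ this gives at most $i-\frac\alpha2 s+\epsilon s<i$ vertices of degree below $i+\frac\alpha2 s-\epsilon s\ge i+1$, so \eqref{d1} holds. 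If instead \eqref{c2} holds at $i''$, then at least $i''+\alpha n\ge i'+\frac\alpha2 n$ vertices have degree at least $n-k+2-i''\ge n-k+2-i'$. In $S$ this gives at least $i+\frac\alpha2 s-\epsilon s\ge i+1$ vertices of degree at least $s-k+2-i-\epsilon s$, which falls short by $\epsilon s$. So I should instead choose the shift in the other direction and balance the two conditions. The likely fix is to apply the hypothesis at $i''=i'+\frac\alpha2n$ for the \eqref{c2} branch. That gives degree at least $n-i'-\frac\alpha2 n$, which still falls short. Hence degree slack must come from \eqref{c1}-type slack only. \eqref{c2} has slack solely in the count ($\alpha n$ extra vertices) and none in the degree. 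To handle this, I will use the count slack by reinterpreting: a vertex with $d_G\ge n-k+2-i'$ misses at most $i'$ vertices. Its degree in $G[S]$ is then at least $\abs{S\setminus A}-1-(\text{number of its non-neighbours in }S)$, and its non-neighbour count in $S$ concentrates at $\le i+\epsilon s$. Since $s\gg1$ and the threshold $s-k+2-i$ is exact, the $\epsilon s$ loss is unavoidable pointwise. I would therefore pass to the index $i^*=i+\epsilon s$ inside $S$. This is the main obstacle. I expect the resolution is to choose $i'=\lfloor(i-2\epsilon s)n/s\rfloor$ so that non-neighbours number at most $i$ in $S$ w.h.p., while the $\alpha n$ count slack absorbs the $2\epsilon s$ shift in the count. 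If $i'$ then becomes $\le 0$, i.e. $i\le 2\epsilon s$, then \eqref{c1} at $i'=1$ gives all but $0$ vertices degree $\ge\alpha n$, hence \eqref{d1} in $S$ trivially. The \eqref{c1} branch with this smaller $i'$ must also be checked: the degree bound becomes $i-2\epsilon s+\alpha s-\epsilon s\ge i+1$, and the count is at most $i-2\epsilon s+\epsilon s<i$, so \eqref{d1} holds.

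Combining the two branches, the good event has probability at least $1-s^{-2}$, which yields the stated bound on $\delta_{2k}$. The remaining checks, namely that $i'<\min\{\min A,(n-k+2)/2\}$ using the rank concentration and $\alpha<1/4$, and the rounding of thresholds to the grid, are routine.
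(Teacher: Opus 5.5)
Your final argument follows the paper's route. You take a random $s$-set through the fixed $2k$-set, and use Hoeffding with the conditioning trick so that the union bound runs only over subsets of $S$. You use rank concentration to check $i'<\min\set{\min A,(n-k+2)/2}$, and you transfer the hypothesis at a downward-shifted index $i'=\floor{(i-2\epsilon s)n/s}$; this is exactly the paper's $i^{+}=\max\set{1,\floor{(i-10\beta s)/\rho}}$.

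\textbf{Missing case.} When $i'\le 0$ you replace $i'$ by $1$ and then assume \eqref{c1} holds there. The hypothesis only gives \eqref{c1} \emph{or} \eqref{c2} at index $1$. If it is \eqref{c2}, concentration cannot help. For $i=1$, \eqref{d2} asks for two vertices $x$ with $d_{G[S]}(A\cup\set{x})=s-k+1$ exactly, and any $\epsilon s$ error is fatal. The fix, which is how the paper treats $i^{+}=1$, is as follows. \eqref{c2} at index $1$ says $A\cup\set{x}$ is universal in $G$ for at least $1+\ceil{\alpha n}$ vertices $x$. Universality passes to $G[S]$ exactly, and at least $(\alpha-\epsilon)s\ge i+1$ such $x$ land in $S$, so \eqref{d2} holds. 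Relatedly, $d_G(A\cup\set{x})\ge n-k+2-i'$ leaves at most $i'-1$ non-neighbours, not $i'$. This off-by-one is harmless for large $i'$, but it is precisely what makes $i'=1$ work.

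\textbf{Wrong intermediate claim.} Your first attempt $i''=i'-\frac\alpha2 n$ already works in the \eqref{c2} branch. Since $n-k+2-i''=n-k+2-i'+\frac\alpha2 n$, the degree bound in $S$ becomes about $s-k+2-i+(\frac\alpha2-\epsilon)s$. You discarded this slack by bounding $n-k+2-i''\ge n-k+2-i'$. So the conclusion that degree slack can only come from \eqref{c1} is false. Your final choice works for exactly the reason you rejected: a downward shift converts the $\alpha n$ count slack of \eqref{c2} into degree slack, while the $\alpha n$ degree slack of \eqref{c1} absorbs the shift. Drop the detour, state the downward shift once, and handle both branches at index $1$; the argument is then complete.
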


\begin{proof}
Denote $\rho = s/n$ and $\beta = \alpha/100$.
Let $Q \subseteq T \subseteq [n]$ with $|Q|=2k$ and $|T| \le 3k-1$.
Let $S$ be a uniformly random $s$-set containing $T$.
For $v \in S$, we define the order of $v$ in $S$ to be $o_S(v):=\abs{S\cap[v]}$.
For any $(k-2)$-subset $A\subseteq S$,
define $m_S(A):=\min_{v\in A}o_S(v)$.
We omit the subscript $S$ when it is clear.

For any $X \subseteq [n]$, we have
$\mathbb E(\abs{S\cap X})
= \abs{T\cap X} + \frac{s-\abs T}{n-\abs T}\abs{X\setminus T}$.
An elementary calculation gives
\[
\big|
\mathbb E(|S\cap X|)-\rho |X|
\big|
\le |T| \le 3k-1\le\beta s/2.
\]
Consequently, if $\abs{S\cap X}$ differs from $\rho\abs X$ by more than $\beta s$, then it differs from its mean by more than $\beta s/2$. 
Hence, by \cref{lem:hoeffding},
\begin{equation}\label{eq:inheritance}
\Pr(\big|
\abs{S\cap X}-\rho\abs X
\big| 
>\beta s)
\le 
2\exp\left(-\frac{\beta^2s^2}{2(s-\abs T)}\right)
\le 
2\exp(-\beta^2s/2).    
\end{equation}

For every integer $1\le i<(s-k+2)/2$, define 
$
i^{+}:= \max
\left\{
1, \floor{\frac{i-10\beta s}{\rho}}
\right\}
$.
For a uniformly random $s$-set $S$ containing $Q$,
we call $S$ \emph{good} if the following hold and \emph{bad} otherwise.
\begin{enumerate}[label=\textup{(\roman*)}]
\item For every integer $1\le i<(s-k+2)/2$,
\[
\big|
\abs{S\cap[i^{+}]}-\rho i^{+}
\big| 
\le \beta s.
\]
\item For every $A\in\binom S{k-2}$ and $x\in S\setminus A$,
\[
\big|
d_{G[S]}(A\cup\set{x})-\rho d_G(A\cup\set{x})
\big|
\le\beta s.
\]
\item For every $A\in\binom S{k-2}$ and every integer $1\le i<(s-k+2)/2$,
\[
\big|
\abs{S\cap X}-\rho\abs X
\big| 
\le\beta s
\]
for both $X=\set{x\in[n]\setminus A:d_G(A\cup\set{x})<i^{+}+\alpha n}$ and $X=\set{x\in[n]\setminus A:d_G(A\cup\set{x})\ge n-k+2-i^{+}}$.
\end{enumerate}
Applying \eqref{eq:inheritance} with $T=Q$ for \textup{(i)}, $T=Q\cup A\cup\set{x}$ for \textup{(ii)}, and $T=Q\cup A$ for \textup{(iii)}, and the union bound yields
\[
\begin{aligned}
\Pr(S\text{ is bad})
&
\le2\exp(-\beta^2s/2)
\left(
s + 
\sum_{A\cup\set{x}\in\binom{[n]}{k-1}}
\Pr(A\cup\set{x}\subseteq S)
+ 2s\sum_{A\in\binom{[n]}{k-2}}\Pr(A\subseteq S)
\right)\\
&
=2\exp(-\beta^2s/2)
\left(
s+\binom{s}{k-1}+2s\binom{s}{k-2}
\right)\\
&<s^{-2}
\end{aligned}
\]
Hence, 
$\Pr(S\text{ is good}) \ge 1-s^{-2}$. It then remains to show that $G[S]\in\cP(s)$ for every good $S$.

Fix a good $S$, an $A\in\binom S{k-2}$, and an integer $i$ satisfying
$1\le i< \min\set{m_S(A),(s-k+2)/2}$.
Recall $i^{+}= \max\left\{1,\floor{\frac{i-10\beta s}{\rho}}\right\}$.
We would like to show that
$1 \le i^{+}<\min\set{\min A, (n-k+2)/2}$
so that the hypotheses of \cref{thm:main} apply to $A$ and $i^{+}$.
If $m(A) = 1$, the desired conclusion is vacuously true. 
Hence, we assume $m(A) \ge 2$, which implies $\min A \ge 2$.
If $i^{+}=1$, the desired result obviously follows.
If $i^{+}>1$,  
$\abs{S\cap[i^{+}]}\le\rho i^{+}+\beta s<i$. If $i^{+}\ge\min A$, then $\abs{S\cap[i^{+}]}\ge m(A)>i$, a contradiction with $i<m(A)$. 
Hence, $i^{+}<\min A$. Also, $\rho i^{+}<i<(s-k+2)/2\le\rho(n-k+2)/2$, so $i^{+}<(n-k+2)/2$.

Suppose first that \eqref{c1} holds at $i^{+}$. 
There are at most $i^{+}-1$ vertices $x\in[n]\setminus A$ with $d_G(A\cup\set{x})<i^{+}+\alpha n$. 
If $i^{+}>1$, at most
$\rho(i^{+}-1)+\beta s<i$
of them lie in $S$, and if $i^{+}=1$, there are none. 
In either case, there are fewer than $i$ such vertices in $S$.
Every other $x\in S\setminus A$ satisfies
$
d_{G[S]}(A\cup\set{x})
\ge \rho d_G(A\cup\set{x})-\beta s
\ge \rho (i^{+}+\alpha n)-\beta s
>i+\beta s
$.
Hence, \eqref{d1} holds at $i$.

Suppose instead that \eqref{c2} holds at $i^{+}$. At least $i^{+}+\alpha n$ vertices $x\in[n]\setminus A$ satisfy $d_G(A\cup\set{x})\ge n-k+2-i^{+}$, and at least
$\rho(i^{+}+\alpha n)-\beta s>i+\beta s$
of them are in $S$.
If $i^{+}=1$, $A \cup\set{x}$ is universal in $G$ for each such $x$, and thus $d_{G[S]}(A\cup\set{x})=s-k+1$.
If $i^{+}>1$, then each such $x$ satisfies
$d_{G[S]}(A\cup\set{x})
\ge \rho d_{G}(A\cup\set{x})-\beta s
\ge \rho (n-k+2-i^{+})-\beta s
\ge s-k+2 - i + \beta s$.
In either case, 
at least $i+\beta s$ vertices $x \in S\setminus A$ satisfy
$d_{G[S]}(A\cup\set{x}) \ge s-k+2-i$. 
Hence, \eqref{d2} holds at $i$.

Therefore, $G[S]\in\cP(s)$ as desired.
\end{proof}

Now we have all the ingredients for the proof of the main theorem.

\begin{proof}[Proof of \cref{thm:main}]
Fix $k\ge3$ and $0<\alpha<1/4$. Choose $s$ sufficiently large such that $s-k+2$ is odd, and then choose $n$ sufficiently large.
By \cref{lem:connected,lem:space,lem:aperiodic,lem:consistency}, the family $\cP(s)$ admits a Hamilton framework, with $F(H)=H$ for every $H\in\cP(s)$.

Let $G$ be an $n$-vertex
$k$-graph satisfying the hypotheses of \cref{thm:main}. By
\cref{lem:inheritance},
$\delta_{2k}\bigl(P^{(s)}(G,\cP(s))\bigr)
\ge(1-s^{-2})\binom{n-2k}{s-2k}$.
Therefore, \cref{thm:framework} applies and yields a Hamiltonian cycle in $G$.
\end{proof}

\section{Further discussion}\label{sec:final}

\subsection{Comparison between
\texorpdfstring{\cref{thm:main}}{the Chvatal-type theorem} and
\texorpdfstring{\cref{cor:posa}}{the Posa-type theorem}}\label{sec:strict}
\ \\

In this part, we show \cref{thm:main} is strictly stronger than \cref{cor:posa}.

\begin{example}
\label[example]{ex:strict}
Fix $k\ge3$ and $0<\alpha<1/(10k)$, and let $n$ be sufficiently
large. Let $t = 2\ceil{\alpha n}$.
Choose pairwise disjoint $(k-1)$-sets $S_1,\ldots,S_t$. Starting with
the complete $k$-graph on $[n]$, retain exactly $1 + \ceil{\alpha n}$ edges containing each $S_j$ and delete all other edges containing $S_j$. \end{example}
By our construction, $d(S_j) = 1+\ceil{\alpha n}$ for each $j \in [t]$. For any $(k-1)$-set $T$ distinct from all the $S_j$, $d(T) \ge n-k+1-t$.
Indeed, if a deleted edge $e$ contains $T$, then $e = T \cup S_j$ for some $j \in [t]$.
Therefore, there are at most $t$ deleted edges containing $T$.

\begin{proposition}\label[proposition]{prop:strict}
Let $G$ be the $k$-graph defined in \cref{ex:strict}.
Under every ordering of $V(G)$,
\eqref{c1} holds for every $A\in\binom{[n]}{k-2}$ and every integer
$1\le r< \min\{\min A,(n-k+2)/2\}$, but no ordering satisfies an even weaker version of \eqref{p}: 
$d(S)\ge \min\{\min S,(n-k+2)/2\}+ 1$ for every $S \in \binom{V(G)}{k-1}$.
\end{proposition}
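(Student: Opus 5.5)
The plan is to treat the two assertions of \cref{prop:strict} separately. Both rest on the degree facts already recorded after \cref{ex:strict}:
\begin{itemize}
\item $d(S_j)=1+\lceil\alpha n\rceil$ for each $j\in[t]$;
\item $d(T)\ge n-k+1-t$ for every other $(k-1)$-set $T$.
\end{itemize}
The argument uses the disjointness of the $S_j$ and is independent of how $V(G)$ is labelled by $[n]$.

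\emph{\eqref{c1} always holds.} Fix any labelling, any $A\in\binom{[n]}{k-2}$ and any admissible $r$.
\begin{itemize}
\item The first step is to show that at most one $x\in[n]\setminus A$ has $A\cup\set{x}$ equal to some $S_j$. Indeed, $A\cup\set{x}=S_j$ forces $A\subseteq S_j$. Since the $S_j$ are pairwise disjoint and $|A|=k-2\ge1$, at most one $j$ qualifies, and then $x$ is the unique element of $S_j\setminus A$.
\item Hence the multiset $\mathcal R(A)$ has at most one entry equal to $1+\lceil\alpha n\rceil$, and all its other entries are at least $n-k+1-t$.
\item For $r=1$ this gives $c_1(A)\ge 1+\lceil\alpha n\rceil\ge 1+\alpha n$.
\item For $r\ge2$ it gives $c_r(A)\ge n-k+1-t$.
\end{itemize}
It remains to check that $n-k+1-2\lceil\alpha n\rceil\ge r+\alpha n$ whenever $r<(n-k+2)/2$. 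It suffices that
\[
n-k+1-2\alpha n-2\ge \frac{n-k+2}{2}+\alpha n ,
\]
which is equivalent to $\bigl(\tfrac12-3\alpha\bigr)n\ge \tfrac{k}{2}+2$. This holds for large $n$ because $\alpha<1/(10k)\le 1/30$. This short computation is the only place where the hypothesis on $\alpha$ enters.

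\emph{No ordering satisfies the weakened \eqref{p}.} Fix any labelling of $V(G)$ by $[n]$.
\begin{itemize}
\item Because the sets $S_1,\dots,S_t$ are pairwise disjoint, their minima $\min S_1,\dots,\min S_t$ are $t$ distinct positive integers. So some $j$ has $\min S_j\ge t=2\lceil\alpha n\rceil$.
\item Also $(n-k+2)/2\ge t$ for large $n$, since $\alpha<1/(10k)$.
\item Therefore the weakened \eqref{p} would require $d(S_j)\ge t+1=2\lceil\alpha n\rceil+1$.
\item But $d(S_j)=1+\lceil\alpha n\rceil<2\lceil\alpha n\rceil+1$, since $\lceil\alpha n\rceil\ge1$. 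This is a contradiction.
\end{itemize}

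Neither part has a genuine obstacle. The only points needing care are the observation that a $(k-2)$-set extends to at most one $S_j$, which uses $k\ge3$ and disjointness, and the pigeonhole step on the distinct minima of the $S_j$.
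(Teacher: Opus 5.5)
Your proposal is correct and follows the paper's proof: you obtain \eqref{c1} from the same observation that a $(k-2)$-set extends to at most one $S_j$, and you rule out the weakened \eqref{p} by the same pigeonhole on the distinct minima $\min S_1,\dots,\min S_t$. You phrase the pigeonhole as ``some $\min S_j\ge t$'' where the paper says ``all $\min S_j\le\lceil\alpha n\rceil$'', and your inequality $(\tfrac12-3\alpha)n\ge\tfrac k2+2$ fills in arithmetic the paper leaves implicit; contrary to your closing remark, the bound on $\alpha$ is also used in your second part, to get $(n-k+2)/2\ge t$.
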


\begin{proof}
For any $(k-2)$-set $A$, there is at most one vertex $x$ such that $A \cup \set{x}$ is some $S_j$. Therefore, 
$c_1(A) \ge 1 + \alpha n$,
and for $2 \le i <\min\{\min A,(n-k+2)/2\}$,
$c_i(A) \ge n - k + 1 - t \ge i + \alpha n$.
Thus \eqref{c1} holds under every ordering.

Suppose that some ordering satisfies the weaker version of \eqref{p}. 

Since 
$d(S_j) = 1 + \ceil{\alpha n} \ge \min\{\min S_j,(n-k+2)/2\}+ 1$ for each $j \in [t]$, 
we have $\min S_j \le \ceil{\alpha n}$.
Since $S_1, \dots, S_t$ are pairwise disjoint, $\min S_1, \dots, \min S_t$ are distinct. But then $\min S_j \le \ceil{\alpha n}$ for every $j\in [t]$ implies $t \le \ceil{\alpha n}$, a contradiction with $t = 2\ceil{\alpha n}$.
\end{proof}

\subsection{Sharpness of
\texorpdfstring{\cref{thm:main}}{the Chvatal-type theorem}} \label{sec:sharp}

Throughout \cref{sec:sharp}, we fix $k\ge3$ and $0 < \alpha < 1/10$.

Recall that, in \cref{thm:main}, for each $A \in \binom{[n]}{k-2}$ and $1\le i<\min\{\min A,(n-k+2)/2\}$, the condition \eqref{c1} requires
\[c_i(A)\ge i+\alpha n,\]
and \eqref{c2} requires
\begin{center}
at least $i + \lceil\alpha n\rceil$ vertices $x$ satisfy 
$d(A\cup\{x\}) \ge n-k+2-i$.    
\end{center}
Using non-Hamiltonian examples, we shall see that none of the following weakenings of \eqref{c1} or \eqref{c2} could be possible:
\begin{lefttagged}
\begin{gather*}
 c_i(A)\ge i+ k-2;
 \tag{$C_1'$}\label{c1'}\\
 \text{at least }i+ k-2\text{ vertices }x\text{ satisfy }
 d(A\cup\{x\})\ge n-k+2-i;
 \tag{$C_2'$}\label{c2'}\\
 \text{at least }i+ \ceil{\alpha n}\text{ vertices }x\text{ satisfy }
 d(A\cup\{x\})\ge n-k+1-i.
 \tag{$C_2''$}\label{c2''}
\end{gather*}
\end{lefttagged}

To construct the desired examples, for a finite set $W$,
we require a family $\mathcal F \subseteq\binom{W}{2k-3}$ such that every $(k-2)$-subset of $W$ is contained in exactly one member of $\mathcal F$.
We call such a $\mathcal{F}$ a \emph{Steiner system}
$S(k-2,2k-3,|W|)$.
%
%For $k=2$, simply let $\mathcal{F}$ contain a set of a single vertex.
For $k=3$ and $\abs{W}$ divisible by $3$, take $\mathcal{F}$ to be a partition of $W$ into triples.
For every $k\ge4$, 
\cite[Theorem~1.4]{Keevash} guarantees the existence of such Steiner systems for infinitely many $|W|$.

We borrow the following constructions from Tuza \cite{Tuza}.

\begin{example}[\cite{Tuza}]\label[example]{ex:sharp}
Let $W$ be a finite set and $\mathcal{F}$ be an $S(k-2,2k-3,|W|)$ system.
Define $L_{\mathcal F}$ to be the $(k-1)$-graph on $W$ whose edges are all the $(k-1)$-subsets contained in an element of $\mathcal F$. 
Let $n=|W|+1$, and identify $W =[n]\setminus\set{1}$.
We define a $k$-graph
$G$ on vertex set $[n]$ by setting
\[
 E(G)
 = \set{\set{1}\cup e:e\in E(L_{\mathcal F})} \cup \binom {W}{k}.
\]
\end{example}

For sufficiently large $n$, such graphs $G$ are non-Hamiltonian, as proved in \cite[Lemma~4]{Tuza}.
For completeness, we include a short argument here.
%It is obvious when $k = 2$. 
Suppose $k \ge 3$ and $G$ had a
Hamiltonian cycle. 
There are $k$ ($k$-uniform) edges containing the vertex $1$ in the Hamiltonian cycle.
The $k$ $(k-1)$-uniform edges obtained by deleting $1$ from each of them form a tight path in $L_{\mathcal F}$ on $2k-2$ distinct vertices. 
Consecutive edges of this path share a $(k-2)$-set. 
The uniqueness property of
$\mathcal F$ therefore forces all the $(k-1)$-uniform edges to be contained in a single member of $\mathcal F$, which has only $2k-3$ vertices, a contradiction.

It remains to show that $G$ satisfies each proposed weakening.
For convenience, denote $m = n-k+2$.
For $A \in \binom{[n]}{k-2}$, if $1 \in A$, any condition upon $1\le i<\min\{\min A,m/2\} = 1$ is vacuously true. We hence assume $1 \notin A$ and consider the degree sequence of $L(A)$. 
As $A \subseteq W$, there is a unique $F \in \mathcal{F}$ such that $A \subseteq F$. Recall $\abs{F} = 2k-3$.
We have $d(A\cup\set{1}) = k-1$ as $A\cup\set{1}\cup\set{x}$ is an edge if and only if $x \in F \setminus A$.
For $v \in W\setminus F$, we have $d(A\cup\set{v}) = m-2$ as $A\cup\set{v}\cup\set{x}$ (with $x \notin A\cup\set{v}$) is an edge if and only if $x \neq 1$.
For $v \in F\setminus A$, we have $d(A\cup\set{v}) = m-1$.
Hence,
\[
 (c_1(A),\ldots,c_m(A))
 =\bigl(k-1,
 \underbrace{m-2,\ldots,m-2}_{m-k\text{ entries}},
 \underbrace{m-1,\ldots,m-1}_{k-1\text{ entries}}\bigr).
\]

\eqref{c1} cannot be weakened to \eqref{c1'} since the degree sequence of $L(A)$ always satisfies \eqref{c1'} or \eqref{c2}. Indeed, for every $1 \le i < \min\set{\min A, m/2}$, the inequality \eqref{c1'}, namely $c_i(A)\ge i+ k-2$, holds.

Similarly, \eqref{c2} cannot be weakened to \eqref{c2'} since the degree sequence of $L(A)$ always satisfies \eqref{c1} or \eqref{c2'}. Within this range, \eqref{c1} can fail only when $i = 1$. In that case, there are $k-1$ vertices $x$ such that $d(A\cup\{x\})\ge m-1$, so \eqref{c2'} holds.

Finally, \eqref{c2} cannot be weakened to \eqref{c2''}. 
Again, within this range, \eqref{c1} can fail only when $i = 1$. In that case, at least $1 + \ceil{\alpha n }$ vertices $x$ satisfy $d(A\cup\{x\})\ge m-2$, so \eqref{c2''} holds.

Although our Hamilton-framework method does not allow us to replace $\alpha n$ in \cref{thm:main} entirely by a constant (mainly because of the inheritance requirement), it is reasonable to conjecture the following exact form of \cref{thm:main}. 

\begin{conjecture}\label[conjecture]{conj:exact}
For every $k\ge3$, there exists $n_0$ such that the following holds for every $n\ge n_0$. Let $G$ be a $k$-graph on $[n]$. 
Suppose that, for every $A\in\binom{[n]}{k-2}$ and every integer $1\le i<\min\{\min A,(n-k+2)/2\}$, at least one of the following holds: 
\[
c_i(A)\ge i+k-1
\quad\text{or}\quad
c_{n-2k+4-i}(A)\ge n-k+2-i.
\]
Then, $G$ contains a (tight) Hamiltonian cycle.
\end{conjecture}
\noindent
Equivalently, the second inequality in \cref{conj:exact} requires
at least $i+k-1$ members of $\mathcal R(A)$ to be at least
$n-k+2-i$.

\subsection{Hamiltonian functions}\label{sec:hard}

\begin{definition}[Hamiltonian function]\label[definition]{def:hfun}
A function $D:\binom{[n]}{k-1} \rightarrow \set{0,\dots,n-k+1}$ is 
\emph{Hamiltonian} if every $k$-graph $G$ on $[n]$ satisfying $d_G(S)\ge D(S)$ for every $S\in\binom{[n]}{k-1}$
contains a Hamiltonian cycle.

For a rational $\alpha \ge 0$, we say $D$ is \emph{$\alpha$-Hamiltonian} if every $k$-graph $G$ on $[n]$ satisfying $d_G(S)\ge D(S) + \alpha n$ for every $S\in\binom{[n]}{k-1}$
contains a Hamiltonian cycle.
\end{definition}

We note that $D$ is Hamiltonian if and only if it is $0$-Hamiltonian.
Chv\'atal's classical theorem provides a complete characterisation of all Hamiltonian sequences (the $k=2$ case) \cite{Chvatal}. 
Sch\"ulke asked in \cite{Schuelke} for a complete characterisation of all the Hamiltonian matrices (the $k=3$ case). 
However,
we give a pessimistic answer to this question by showing that, for $k \ge 3$, determining whether a function is Hamiltonian is NP-hard.
In fact, we show a stronger result that even when a pointwise relaxation of $\alpha n$ is allowed, this question remains NP-hard.
Precisely, determining whether a function is $\alpha$-Hamiltonian is NP-hard for every $k \ge 3$ and all sufficiently small $\alpha$.
Therefore, we cannot hope for any equivalent characterisation that is checkable in P unless P $=$ NP. 

To put things formally, we define $\alpha$-HF$_k$ to be the collection of all the $\alpha$-Hamiltonian functions and write HF$_k=0$-HF$_k$.
We remark that for $\alpha \le \beta$, $\alpha$-HF$_k \subseteq \beta$-HF$_k$, and determining  whether $D \in \alpha\text{-}\mathrm{HF}_k$ is harder than determining whether
$D \in \beta\text{-}\mathrm{HF}_k$
(in the sense that we may determine whether $D \in \beta\text{-}\mathrm{HF}_k$ by asking whether $D + \ceil{\beta n} - \ceil{\alpha n} \in \alpha\text{-}\mathrm{HF}_k$).
In a slight abuse of notation, we may also use $\alpha$-HF$_k$ and HF$_k$ to denote the corresponding decision problems.

A \emph{promise problem} is a problem in which the input is promised to belong to exactly one of two cases.
The gap Hamiltonicity problem is a promise problem that asks whether an input graph is Hamiltonian or has a small ``gap'' of being Hamiltonian; see \cref{thm:gap-hamiltonicity}.
It is NP-hard as stated in the proof of \cite[Theorem~2]{BenderChekuri}.
We shall show $\alpha$-HF$_k$ is NP-hard by a reduction from the gap Hamiltonicity problem. 
For a graph $G$ on $n$ vertices and a cyclic vertex ordering
$\pi = (v_1,\dots,v_n)$, put $v_{n+1}=v_1$, and define
\[
g(\pi)
:=\abs{\set{i\in[n]:v_i v_{i+1}\notin E(G)}}.
\]

\begin{proposition}[Gap Hamiltonicity]
  \label[proposition]{thm:gap-hamiltonicity}
There exists a constant $\varepsilon>0$ for which the following promise problem is NP-hard.  
The input is a graph $G$ on $n$ vertices,
and the task is to distinguish between
\begin{enumerate}[label=\textup{(\roman*)}]
 \item $G$ contains a Hamiltonian cycle;
 \item $g(\pi)\ge\varepsilon n$ for every cyclic ordering $\pi$ of $V(G)$.
\end{enumerate}
\end{proposition}

We shall use the version of \cref{thm:gap-hamiltonicity} in which $n$ is even. It is a simple observation that the even-order version remains NP-hard (indeed, we may reduce any odd-order input to an even-order one by adding a new universal vertex).

We now show that recognising Hamiltonian functions is hard, already for functions taking only two values.

\begin{theorem}\label{thm:alpha-hard}
For every $k\ge3$, there exists $\alpha_k >0$ such that for every
rational $\alpha\in[0,\alpha_k]$, the decision problem
$\alpha$-HF$_k$ is NP-hard.
\end{theorem}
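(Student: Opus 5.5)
The plan is a polynomial-time reduction from the even-order gap Hamiltonicity problem of \cref{thm:gap-hamiltonicity}. By the monotonicity remark preceding the theorem, $\alpha$-HF$_k$ is at least as hard as $\beta$-HF$_k$ for $\alpha\le\beta$. So it suffices to treat $\alpha=\alpha_k$, though I will run the construction for a general rational $\alpha\le\alpha_k$ so that one argument covers all cases. Given a graph $G_0$ on $N$ (even) vertices, I will build in polynomial time a two-valued function $D=D_{G_0}$ on $\binom{[n]}{k-1}$, with $n=\Theta(N)$. The values are $n-k+1$ (``forced'') on a set $\mathcal S_{\mathrm{hi}}$ and a small value (``free'') elsewhere. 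The aim is that $D$ is $\alpha$-Hamiltonian when $G_0$ is Hamiltonian, and is not when $g(\pi)\ge\varepsilon N$ for every cyclic ordering $\pi$. The key observation is that every $G$ with $d_G\ge D+\alpha n$ contains the forced edges, namely all $k$-sets having a $(k-1)$-subset in $\mathcal S_{\mathrm{hi}}$. Conversely, the adversary may delete any edge all of whose $(k-1)$-subsets are free, provided each free set keeps degree at least $D+\alpha n$.

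For the construction, I will blow up each vertex $v$ of $G_0$ into a block $B_v$ of $k-1$ new vertices, and add a padding set $Y$ of size $\Theta(\alpha n)$. A $(k-1)$-set $S$ is declared forced in three cases: it meets at most one block; or it lies in $B_u\cup B_v$ for some $uv\in E(G_0)$; or it meets $Y$. The intent is that a tight cycle traversing $B_{v_1},B_{v_2},\dots$ consecutively uses only forced edges whenever $v_1v_2\cdots$ is a Hamiltonian cycle of $G_0$. The first case ensures that windows of length $k$ straddle at most two consecutive blocks, which is why the blocks have size $k-1$. This yields the completeness direction: if $G_0$ is Hamiltonian, every admissible $G$ contains that tight Hamiltonian cycle, after threading $Y$ in somewhere. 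The set $Y$ is attached so that every set meeting $Y$ is complete, and $Y$ can be inserted as a contiguous segment. For free sets I take $D=0$, so the requirement is only degree at least $\alpha n$. I will meet it in the adversarial graph $G^\ast$ by keeping exactly the edges that meet $Y$. Hence $\abs{Y}\ge\alpha n+k$ is needed, and $\alpha_k$ must be small enough that $\abs{Y}$ is still much smaller than $\varepsilon N/k$.

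For soundness, suppose $g(\pi)\ge\varepsilon N$ for every $\pi$, and consider $G^\ast$, which consists of the forced edges together with the edges meeting $Y$. In a tight Hamiltonian cycle of $G^\ast$, every window avoiding $Y$ must be a forced edge. I will prove a structural claim: such a window cannot contain vertices of three distinct blocks, nor of two blocks $B_u,B_v$ with $uv\notin E(G_0)$. Only the first claim needs care, since a $k$-set meeting three blocks might still have a forced $(k-1)$-subset lying in two adjacent blocks. I would refine $\mathcal S_{\mathrm{hi}}$ (for instance, forcing only subsets that meet each of the two blocks in a prescribed way) so that this is excluded. Granting the claim, delete the $Y$-vertices from the cycle. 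The maximal $Y$-free arcs then read off as a cyclic sequence of blocks whose consecutive transitions are edges of $G_0$, except at the at most $\abs{Y}$ places where $Y$ interrupts. Contracting blocks gives a cyclic ordering $\pi$ of $V(G_0)$ with $g(\pi)\le \abs{Y}+O(k)<\varepsilon N$, a contradiction. Thus $D$ is not $\alpha$-Hamiltonian.

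I expect the main obstacle to be reconciling two requirements. The free $(k-1)$-sets must have degree at least $\alpha n$ in $G^\ast$ without that degree creating shortcuts. At the same time, the forced edges must neither let a cycle hop between non-adjacent blocks nor interleave three blocks. My first attempt is the $Y$-padding described above, with all degree supplied by edges containing $Y$, together with a careful choice of which $(k-1)$-subsets of $B_u\cup B_v$ are forced. If interleaving still causes trouble, I would lengthen the blocks to $2k$ vertices with an internal ``path'' order and force only sets respecting that order, so that a forced window pins down the orientation of the traversal.
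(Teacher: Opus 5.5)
Your reduction differs from the paper's, and it has a genuine gap in the soundness direction. For $k=3$ it fails outright.

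First, a small slip. If forced sets get $D(S)=n-k+1$, then for $\alpha>0$ no $k$-graph satisfies $d_G\ge D+\alpha n$, so $D$ is vacuously $\alpha$-Hamiltonian. You need $D(S)=n-k+1-\lceil\alpha n\rceil$, as the paper uses.

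The real problem is your structural claim. A forced $(k-1)$-set $S$ makes $S\cup\{w\}$ an edge for every $w$. So your first case, and also your second case with $S=B_u$ whenever $u$ is not isolated, makes every $k$-set $B_u\cup\{w\}$ an edge. Windows can therefore meet two non-adjacent blocks, contrary to the half of the claim you treat as easy.

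For $k=3$ this is fatal. Blocks have size $2$, and in any block-by-block ordering every three consecutive vertices contain a whole block. Hence $G^\ast$ is Hamiltonian for every $G_0$.

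The refinement you gesture at (forcing only sets that meet both blocks) does not rescue $k=3$. Force all cross pairs between $B_u$ and $B_v$ for $uv\in E(G_0)$, and take $G_0$ to be a perfect matching $\{a_ib_i\}$, so that $g(\pi)\ge N/2$ for every $\pi$. Then $a_1^1b_1^1a_2^1b_2^1\cdots a_{N/2}^1b_{N/2}^1a_1^2b_1^2\cdots a_{N/2}^2b_{N/2}^2$ is a tight Hamiltonian cycle, because every three consecutive terms contain some pair $a_i^tb_i^t$.

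The underlying issue is your step ``the $Y$-free arcs read off as a cyclic sequence of blocks''. Nothing in the construction forces a tight cycle to traverse each block contiguously. A cycle that splits blocks into several visits yields only a closed walk in $G_0$ with repeated vertices, and that says nothing about $g(\pi)$. Even for $k\ge4$, your soundness argument is unproven for this reason.

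The paper avoids blow-ups altogether. It sets $D(S)$ high exactly when $S$ contains an edge of $G$, so the forced $k$-sets are precisely those containing an edge of $G$. It then pads $X=V(G)$ with a \emph{large} set $Y$, $|Y|=(k-2)n/2$, so that $X$ has density exactly $2/k$ in $V$.

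The adversarial graph has as edges the $k$-sets meeting a small $R\subseteq Y$ with $|R|=\lceil\alpha N\rceil$, together with those containing an edge of $G$. In a tight Hamiltonian cycle of it, every window avoiding $R$ has $q_i\ge 2$ vertices of $X$, while $\sum_i q_i=kn=2N$. Hence all but at most $3k\lceil\alpha N\rceil$ windows contain exactly two $X$-vertices. Those two vertices are consecutive in the induced ordering $\pi$ of $X$ and must be adjacent in $G$.

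This double count is the mechanism that rules out ``cheating'' traversals, and your proposal has no substitute for it. Your small padding set plays only the role of $R$.
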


\begin{proof}
Let $0<\varepsilon<1$ be supplied by the even-order version of \cref{thm:gap-hamiltonicity} and set
$\alpha_k=\frac{\varepsilon}{10k^2}$.
Let $\alpha\in[0,\alpha_k]$ be rational, and let
$n$ be even and sufficiently large.
We reduce from the gap Hamiltonicity problem.

Let $G$ be a graph on vertex set $X$ with $\abs{X} = n$.
Let $Y$ be disjoint from $X$ with $\abs{Y} = (k-2)n/2$. 
Set $V = X \cup Y$ and $N = |V|$.
Define $D:\binom{V}{k-1}\rightarrow\mathbb{N}_{\ge0}$ by
\[
D(S) = 
\begin{cases}
N-k+1 - \ceil{\alpha N},&\text{if $S$ contains an edge of $G$},\\
0,&\text{otherwise}. 
\end{cases}
\]

Suppose first that $G$ has a Hamiltonian cycle
$x_1,x_2, \dots, x_n$. We want to show $D$ is $\alpha$-Hamiltonian.

Let $H_1$ be a $k$-graph such that $d(S) \ge D(S) + \ceil{\alpha N}$ for every $S \in \binom{V}{k-1}$. In particular, for $S$ that contains an edge of $G$, $d(S) \ge N - k + 1$.
Label vertices in $Y$ by
\[
y_{1,1},\dots,y_{1,k-2},\dots,y_{i,1},\dots,y_{i,k-2}
,\dots,y_{n/2,1},\dots,y_{n/2,k-2}.
\]
For every $i \in [n/2]$, insert $y_{i,1},\dots,y_{i,k-2}$ between $x_{2i-1}$ and $x_{2i}$ in $x_1, \dots, x_n$. The resulting cyclic ordering is a Hamiltonian cycle in $H_1$.

Suppose now that $g(\pi)\ge\varepsilon n$ for every cyclic ordering $\pi$ of $V(G)$. We want to show $D$ is not $\alpha$-Hamiltonian.

Fix $R\in\binom{Y}{\ceil{\alpha N}}$ and define the $k$-graph $H_0$ on $V$ by
\[
E(H_0)=\set{e\in\binom{V}{k}:
e\cap R\ne\emptyset
\text{ or $e$ contains an edge of $G$}}.
\]
Then, for any $S \in \binom{V}{k-1}$, $d(S) \ge \ceil{\alpha N}$. Moreover, for $S$ that contains an edge of $G$, $d(S) \ge N -k + 1$. 
Therefore, $d(S) \ge D(S) + \ceil{\alpha N}$ for every $S \in \binom{V}{k-1}$.
It suffices to show $H_0$ is not Hamiltonian. 
Suppose, for a contradiction, that $v_1,\dots,v_N$ is a Hamiltonian cycle in $H_0$.
 
For $i\in[N]$, let 
$W_i=\set{v_i,v_{i+1},\dots,v_{i+k-1}}$ and 
$q_i=\abs{W_i\cap X}$, where subscripts are read modulo $N$.
Let $I=\set{i\in[N]: W_i\cap R\ne\emptyset}$. 
Since every vertex of $R$ occurs in exactly $k$ of the sets $W_i$, 
$\abs{I}\le k\ceil{\alpha N}$.
For $i\notin I$, $W_i$ avoids $R$ and hence contains an edge of $G$, so $q_i\ge2$.  
Let
$J= \set{i\in[N]\setminus I:q_i\ge 3}$.
Since every vertex of $X$ occurs in exactly $k$ of $W_i$,
we have
$kn=\sum_{i=1}^N q_i \ge2(N-|I|)+|J| = kn-2|I| +|J|$.
Consequently,
$|J|\le2|I|$ and 
$\abs{I\cup J}\le 3k\ceil{\alpha N}$.

Delete the vertices of $Y$ from $(v_1,\ldots,v_N)$, obtaining a cyclic ordering $\pi=(x_1,\ldots,x_n)$ of $X$.  
For $j \in[n]$, 
let $i_j$ be defined by
$v_{i_j}=x_j$.  
If $i_j\notin I\cup J$, then $W_{i_j}$ avoids $R$ and contains exactly $x_j$ and $x_{j+1}$ of $X$. 
Since $W_{i_j}\in E(H_0)$, $x_j x_{j+1} \in E(G)$ for every $i_j\notin I\cup J$.
In other words, if $x_j x_{j+1}$ is not an edge in $G$, then $i_j \in I\cup J$.
Therefore,
$g(\pi) \le 3k\ceil{\alpha N}<\varepsilon n$,
a contradiction with $g(\pi) \ge \varepsilon n$.
Therefore, $D$ is not $\alpha$-Hamiltonian.

The reduction can be performed in polynomial time.
\end{proof}

\begin{proof}[Proof of \cref{thm:hard}]
This is a special case of \cref{thm:alpha-hard} in which $\alpha = 0$.
\end{proof}

\section*{Acknowledgements}
The author would like to thank Bjarne Sch\"ulke, Sim\'on Piga, and Nicol\'as Sanhueza-Matamala for very helpful discussions, and 
Peter Allen and Hong Liu for advice on revising the manuscript.
The author also thanks the IBS ECOPRO Group and the 2026 IBS ECOPRO Summer Research Program, both led by Hong Liu, for their hospitality.

\end{document}